\newcommand\blfootnote[1]{%
  \begingroup
  \renewcommand\thefootnote{}\footnote{#1}%
  \addtocounter{footnote}{-1}%
  \endgroup
}
\newtheorem{theorem}{Theorem}[section]
\newtheorem{example}[theorem]{Example}
\newtheorem{conjecture}[theorem]{Conjecture}
\newtheorem{corollary}[theorem]{Corollary}
\newtheorem{proposition}[theorem]{Proposition}
\newtheorem{lemma}[theorem]{Lemma}
\newtheorem{problem}[theorem]{Problem}
\begin{document}

\title{\bf Siblings of Direct Sums of Chains}

\author{Davoud Abdi\blfootnote{2020 \textit{Mathematics Subject Classification:} Combinatorics of partially ordered sets (06A07). \newline {\em Key words:} siblings, chains, embedding. \newline This paper is a project as part of author's thesis under supervision of Dr. Claude Laflamme and Dr. Robert Woodrow at the Department of Mathematics and Statistics, University of Calgary, Calgary, AB, Canada (2017-2022).}} 

\maketitle              

\begin{abstract}
We prove that a countable direct sum of chains has  one, countably many or else continuum many isomorphism classes of siblings. This proves Thomass\'e's conjecture for such structures. Further, we show that a direct sum of chains of any cardinality has one or infinitely many siblings, up to isomorphism. 
\end{abstract}


\section{Introduction}

Two structures $R$ and $S$ are called {\em equimorphic} or {\em siblings}, denoted by $R\approx S$, if they mutually embed in each other. The connection between equimorphism and isomorphism goes back to a fundamental theorem in set theory namely the Cantor-Schr\"{o}der-Bernstein Theorem asserting that if there are mutual injections between two sets $A$ and $B$, then there is a bijection between $A$ and $B$. Similarly, in the category of vector spaces, if there are mutual injective linear transformations between two vector spaces over the same field, then the vector spaces are isomorphic. Nonetheless, the rational numbers considered as a chain has continuum many siblings, up to isomorphism. Therefore, it is a natural problem to investigate relations, understand all their siblings and as a first approach count the siblings (up to isomorphism). 

The number of isomorphism classes of siblings of a structure $R$ is called the {\em sibling number} of $R$ and we denote it by $Sib(R)$. A relation $R$ is a pair $(V,E)$ where $V$ is a non-empty set, called the {\em domain} of $R$, and $E\subseteq V^n$ for some positive integer $n$. Then, the integer $n$ is called the {\em arity} of $E$. A {\em binary relation} is a relation $(V,E)$ where the arity of $E$ is 2. A relation $(V,E)$ has cardinality $\kappa$ if $|V|=\kappa$. Thomass\'e \cite{TH1} made the following conjecture:

\begin{conjecture} [Thomass\'e's Conjecture, \cite{TH1}]
Let R be a countable relation. Then, $Sib(R)=1$ or $\aleph_0$ or $2^{\aleph_0}$. 
\end{conjecture}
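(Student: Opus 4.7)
The statement is Thomass\'e's well-known open conjecture, so any proposal is necessarily programmatic. The trichotomy splits naturally into two independent assertions that I would address by different methods: the \emph{upper dichotomy} (UD), that $Sib(R)$ cannot take an uncountable value strictly below $2^{\aleph_0}$, and the \emph{finite rigidity} (FR), that $Sib(R)$ finite forces $Sib(R)=1$. Together with the trivial observation that $Sib(R) \geq 1$, these yield the claimed three values.

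For (UD) the route is descriptive set-theoretic. View the countable relations with domain $\omega$ in the signature of $R$ as a Polish space $X$. The set $S_R = \{S \in X : S \approx R\}$ is analytic, being defined by existentially quantifying over pairs of embeddings, each a Borel condition on $X\times \omega^\omega$. The restriction of the isomorphism relation to $S_R$ is likewise analytic. A Silver-style perfect-set dichotomy would then yield either at most countably many isomorphism classes in $S_R$ or a perfect antichain of pairwise non-isomorphic siblings, giving $2^{\aleph_0}$. The subtlety is that Silver's theorem applies to coanalytic equivalence relations, while isomorphism of countable structures is properly analytic; the corresponding dichotomy for analytic equivalence relations is essentially Vaught's conjecture, itself open. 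So even this soft step may require either a refinement exploiting structural features of $R$, or partial results on Vaught's conjecture in the setting of Polish $S_\infty$-actions (Steel, Becker--Kechris).

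For (FR), the genuine combinatorial content, one follows a Bonato--Tardif type scheme. Given pairwise equimorphic, pairwise non-isomorphic $R_1,\ldots,R_n$ with mutual embeddings $f_{ij}:R_i\hookrightarrow R_j$, one analyzes the self-embeddings $g_i = f_{ji}\circ f_{ij}$ and the orbit partitions they induce on each $R_i$. By iterating, taking limits, and swapping orbit fragments between the $R_k$, one aims either to upgrade some $f_{ij}$ to an isomorphism (reducing $n$) or to manufacture an $(n{+}1)$st sibling, contradicting finiteness. The main obstacle, and the reason Thomass\'e's conjecture has resisted uniform attack for decades, is the absence of any canonical structural invariant attached to an \emph{arbitrary} countable relation against which to run this cut-and-paste argument.

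The viable programme, to which the present paper contributes by settling the case of countable direct sums of chains (the natural invariant being the multiset of chain-types under their coordinatewise quasi-order), is to prove the conjecture class by class: fix for each class a structural decomposition, then execute (UD) and (FR) relative to that decomposition. A uniform proof across all countable relations would either require a universal invariant or a fundamentally new method; from the techniques available in the present paper I see no direct route to that generality, so my honest expectation is that the final statement as worded must for now be approached one structural class at a time.
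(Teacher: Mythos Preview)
The paper contains no proof of this statement: it is stated as a conjecture, and the paper's own contribution is restricted to the special case of countable direct sums of chains (Theorem~\ref{ThomasseP}). More to the point, the paper's introduction explicitly records that the conjecture is \emph{false} in general: Abdi, Laflamme, Tateno and Woodrow \cite{ALTW} construct countable partial orders with an arbitrary prescribed finite number of siblings, thereby disproving Thomass\'e's conjecture as stated.

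Your proposal, while candid about being programmatic, proceeds as though the conjecture were merely open rather than refuted. This is the genuine gap. Your (FR) step---that a finite value of $Sib(R)$ forces $Sib(R)=1$---is exactly the assertion the counterexamples of \cite{ALTW} defeat; no Bonato--Tardif style cut-and-paste argument can be made to work uniformly, because the conclusion fails. The descriptive-set-theoretic route you sketch for (UD) is likewise moot once (FR) is known to be false, since the trichotomy itself no longer holds. The correct response to this item is therefore not a programme toward a proof but the observation that the statement as worded admits no proof, and that the residual task---which the present paper addresses for direct sums of chains---is to identify those classes of countable relations for which the trichotomy \emph{does} hold.
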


There is also an alternate version of the conjecture above as follows:

\begin{conjecture} [The Alternate Thomass\'e Conjecture]
Let R be a relation of any cardinality. Then, $Sib(R)=1$ or $\infty$. 
\end{conjecture}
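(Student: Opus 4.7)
The plan is to prove the dichotomy by contradiction. Assume $R$ admits a sibling $S$ with $R \not\cong S$; I would show this alone forces $Sib(R) = \infty$. Fix embeddings $f: R \hookrightarrow S$ and $g: S \hookrightarrow R$, and set $h := g \circ f : R \hookrightarrow R$. If $h$ were surjective then $f$ would be a bijection, hence $R \cong S$, contradicting the assumption. Therefore $h$ is a strict self-embedding, yielding a chain of copies $R \supsetneq h(R) \supsetneq h^2(R) \supsetneq \cdots$ together with non-empty ``shells'' $X_n := h^n(R) \setminus h^{n+1}(R)$, each inheriting the induced relation.

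I would then use these shells as raw material to construct a family of candidate siblings $\{R_\alpha\}$, where for each index $\alpha$ the structure $R_\alpha$ is obtained from $R$ by formally inserting $\alpha$ additional disjoint copies of $X_0$ between $R$ and $h(R)$, with the relation extended so that each new copy is linked to the next by a translate of $h$. Each $R_\alpha$ mutually embeds with $R$: the embedding $R \hookrightarrow R_\alpha$ is tautological, while $R_\alpha \hookrightarrow R$ folds the added copies into successively deeper shells via iterates of $h$. Hence every $R_\alpha$ is a sibling of $R$, and the problem reduces to exhibiting infinitely many pairwise non-isomorphic members of this family.

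The hard part is precisely this non-isomorphism step, and it is where the general statement departs from the countable case and from the structured case of direct sums of chains. One needs an isomorphism invariant of $R$ which is sensitive to shell multiplicities, morally a canonical decomposition of $R$ into indecomposable pieces together with their multiplicities that cannot be absorbed by automorphisms. For direct sums of chains this invariant is delivered by the cardinality sequence of chains of each order type, and the shell-insertion above changes the sequence in a controlled manner, which is the heart of the paper's actual theorems. For a relation $R$ of arbitrary cardinality with no assumed decomposition, producing such a spectrum is the central obstacle behind the Alternate Thomass\'e Conjecture; completing the plan therefore requires an abstract decomposition theorem, for instance via an age-theoretic or monomorphic-decomposition invariant that is preserved by siblings yet altered by the shell-insertion, so that the multiset of invariants realised by the $R_\alpha$ is infinite. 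This rigidity-extraction problem, rather than the embedding construction, is the genuine difficulty.
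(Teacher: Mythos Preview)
The statement you are attempting to prove is listed in the paper as a \emph{conjecture}, not a theorem, and the paper offers no proof of it in general. On the contrary, the introduction cites \cite{ALTW} for countable partial orders (hence relations) with an arbitrary finite number of siblings greater than one; any such example has $Sib(R)\notin\{1,\infty\}$ and therefore refutes the Alternate Thomass\'e Conjecture in full generality. The paper's positive contribution is to verify the dichotomy in the restricted class of direct sums of chains (Theorems~\ref{DichotomyPcountable} and~\ref{AltThomasseP}), where the component decomposition supplies exactly the invariant you are seeking.

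Consequently your plan cannot be completed, and the failure occurs precisely where you locate it: the non-isomorphism step. The shell-insertion construction of the $R_\alpha$ is fine as a source of siblings, but for a general relation there is no ``spectrum'' or canonical decomposition that distinguishes them; the counterexamples show that a relation can absorb such modifications while keeping only finitely many isomorphism types among its siblings. The abstract decomposition theorem you ask for simply does not exist. A smaller point: from $h=g\circ f$ surjective you do not get that $f$ is a bijection onto $S$; what follows is that $g$ is surjective, hence $g$ is a bijective embedding $S\to R$ and thus an isomorphism. The conclusion $R\cong S$ survives, but the reasoning should go through $g$, not $f$.
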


The alternate Thomass\'{e} conjecture is connected to a conjecture of Bonato-Tardif \cite{BT} ({\em The Tree Alternative Conjecture}) stating that the sibling number of a tree of any cardinality is one or infinite in the category of trees. The connection between these two conjectures is through this fact that each sibling of a tree $T$ is a tree if and only if $T\oplus 1$, the graph obtained by adding an isolated vertex to $T$, does not embed in $T$. Therefore, for a tree $T$ not equimorphic to $T\oplus 1$, the tree alternative conjecture and the alternate Thomass\'e conjecture are equivalent. The tree alternative conjecture has been verified for rayless trees by Bonato and Tardif \cite{BT}, for rooted trees by Tyomkyn \cite{TY} and for scattered trees by Laflamme, Pouzet and Sauer \cite{LPS}. In a parallel direction, the alternate Thomass\'e conjecture was proved for rayless graphs by Bonato, Bruhn, Diestel and Spr\"ussel \cite{BBDS}, for countable $\aleph_0$-categorical structures by Laflamme, Pouzet, Sauer and Woodrow \cite{LPSW}, for countable cographs by Hahn, Pouzet and Woodrow \cite{HPW} and for countable universal theories by Braunfeld and Laskowski \cite{BL}. 

Tateno \cite{TAT} claimed a counterexample to the tree alternative conjecture in his thesis. 
Abdi, Laflamme, Tateno and Woodrow \cite{ALTW} revisited and verified Tateno's claim and provided locally finite trees having an arbitrary finite number of siblings. Moreover, using an adaptation, they \cite{ALTW} provided countable partial orders with a similar conclusion which disproves Thomass\'e's conjecture. It turns out that both conjectures of Bonato-Tardif and Thomass\'e  are false. 
This is a major development in the programme of understanding siblings of a given mathematical structure. We now know that the simple dichotomy one vs infinite for the number of siblings is no longer valid, and the situation is much more intricate. While counting the number of siblings provides a good first insight into the siblings of a structure, in particular understanding which structures exactly do satisfy the conjectures, the equimorphy programme is now ready to move on and focus on the actual structure of those siblings. 
Laflamme, Pouzet and Woodrow verified the following (see \cite{LPW} Theorem 3.1 and Corollary 3.6). 

\begin{theorem} [\cite{LPW}] \label{Chaindich}
Let C be a chain. Then $Sib(C)=1$ or $\infty$. Moreover, if C is countable, then $Sib(C)=1$, $\aleph_0$ or $2^{\aleph_0}$.
\end{theorem}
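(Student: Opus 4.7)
The plan is to separate the argument according to whether $C$ embeds the chain $\mathbb{Q}$ of rationals (the \emph{non-scattered} case) or not (the \emph{scattered} case). A general observation used throughout is that if $C'$ is a sibling with $C' \not\cong C$, then composing the mutual embeddings $f: C \to C'$ and $g: C' \to C$ yields a non-surjective self-embedding $g \circ f$ of $C$, so the "remainder" $C \setminus g(f(C))$ provides material that can be modified to produce further siblings.

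In the non-scattered case, $C$ contains a convex subchain with a dense condensation class carrying a copy of $\mathbb{Q}$. I would then "mark" points of such a dense subchain according to an arbitrary subset $A \subseteq \mathbb{Q}$, say by replacing each marked point with a fixed two-element chain, to obtain a family $\{C_A\}$ of chains. Using the universality and homogeneity of $\mathbb{Q}$, every $C_A$ is equimorphic to $C$, while uncountably many choices of $A$ produce pairwise non-isomorphic chains. This delivers $2^{\aleph_0}$ siblings when $C$ is countable and infinitely many in general.

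For the scattered case I would induct on the Hausdorff rank of $C$. The base cases of finite chains and ordinals (together with their reverses) are rigid and directly have $Sib=1$. For the inductive step, Hausdorff's theorem presents $C$ as a lexicographic sum of scattered chains of strictly smaller rank, indexed by a scattered chain also of smaller rank. If $C$ admits a non-isomorphic sibling, a careful analysis of the mutual embeddings shows that either the index chain or at least one summand type has a non-trivial sibling; the inductive hypothesis then yields infinitely many siblings for that piece, which are lifted back to infinitely many siblings of the whole sum. The main obstacle is this lifting step: one must verify that replacing a summand or index by an alternative sibling preserves equimorphism with $C$ and that distinct replacements give non-isomorphic sums, which requires tracking how the equimorphism between $C$ and $C'$ respects the canonical decomposition.

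For the countable cardinality refinement, every sibling of a countable chain is countable, so trivially $Sib(C) \le 2^{\aleph_0}$. To rule out intermediate cardinalities, I would, whenever $Sib(C)$ is uncountable, construct a perfect binary tree of siblings by iterating local modifications (typically cut-and-swap operations at chosen positions in $C$), arranging that at each node the two continuations are distinguished by a fixed isomorphism invariant inherited by all of their descendants. The $2^{\aleph_0}$ many branches then yield $2^{\aleph_0}$ pairwise non-isomorphic siblings, and combined with the dichotomy above this gives the trichotomy $1$, $\aleph_0$, $2^{\aleph_0}$.
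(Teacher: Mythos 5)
First, a point of reference: the paper contains no proof of this statement --- it is imported from Laflamme--Pouzet--Woodrow \cite{LPW} (their Theorem~3.1 and Corollary~3.6) --- so your proposal has to stand on its own. Its skeleton (scattered versus non-scattered, induction on Hausdorff rank) is the right family of ideas, but both load-bearing steps fail as written. In the non-scattered case, take $C=\mathbb{Z}\cdot\mathbb{Q}$, i.e.\ $\mathbb{Q}$ many consecutive copies of $\mathbb{Z}$. A dense subchain of $C$ meets each $\mathbb{Z}$-block in at most one point, and replacing a point of a copy of $\mathbb{Z}$ by a two-element chain leaves a countable discrete unbounded chain with all intervals finite, i.e.\ a copy of $\mathbb{Z}$. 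Hence $C_A\cong C$ for every $A\subseteq\mathbb{Q}$: your family collapses to a single isomorphism type and produces no siblings at all, whereas the theorem demands $2^{\aleph_0}$ of them here. (The appeal to ``universality and homogeneity of $\mathbb{Q}$'' to get $C_A\hookrightarrow C$ is also misplaced: universality concerns embeddings \emph{into} $\mathbb{Q}$, while you need $C_A$ to embed into $C$, which is generally not $\mathbb{Q}$.) The correct construction must vary the scattered condensation classes themselves, not individual points inside them.

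The scattered case has a symmetric problem: the inductive step rests on the claim that if $C$ has a non-isomorphic sibling, then the index chain or some summand of its Hausdorff decomposition already has one. This is false. For $C=\mathbb{Z}\cdot\omega$ the index is $\omega$ and every summand is $\mathbb{Z}$; mutually embeddable ordinals are equal, and the only subchain of $\mathbb{Z}$ containing a copy of $\mathbb{Z}$ is $\mathbb{Z}$ itself, so each piece has exactly one sibling. Yet the chains $n+\mathbb{Z}\cdot\omega$ for $n<\omega$ are pairwise non-isomorphic siblings of $C$ (they are distinguished by the number of elements with only finitely many predecessors, and $n+\mathbb{Z}\cdot\omega$ embeds into $\mathbb{Z}\cdot\omega$ by sending the initial $n$ points into the first block and the rest into the later blocks). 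So $Sib(\mathbb{Z}\cdot\omega)=\infty$ even though every piece of the decomposition is sibling-unique; siblings of a lexicographic sum can arise from shifts that realign the blocks rather than from replacing a piece by one of its siblings, and controlling exactly these is the hard content of \cite{LPW}. Finally, the countable refinement (``whenever $Sib(C)$ is uncountable, build a perfect binary tree of siblings'') is a restatement of the goal rather than an argument; in \cite{LPW} the values $1$, $\aleph_0$, $2^{\aleph_0}$ are obtained by computing the sibling number case by case, not by a general perfect-set mechanism. In short, the proposal has the right contours but a genuine gap at each decisive step.
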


As a natural case towards the equimorphy programme for partial orders, we consider partially ordered sets whose components are chains and verify both Thomass\'{e}'s conjecture and its alternate version for such structures.

\section{Definitions and Examples}

A \textit{poset (partially ordered set)} is a binary relation $P=(V,\leq)$ where $\leq$, the {\em order} of $P$, is  reflexive, antisymmetric and transitive. If a poset $P$ is given, by $\leq_P$, we mean the order of $P$. Let $P=(V,\leq)$ be a poset. By an element $x$ of $P$ we mean $x\in V$. Two elements $x, y \in P$ are called \textit{comparable} if $x \leq y$ or $y \leq x$, otherwise they are \textit{incomparable}, denoted by $x \perp y$. 
A \textit{chain} ({\em linearly ordered set}), resp an \textit{antichain}, is a poset whose elements are pairwise comparable, resp incomparable. Any substructure of a chain is called a {\em subchain}. For $n<\omega$, we denote a chain of size $n$ by $C^n$.

Let $\{C_i\}_{i\in I}$ be a family of chains. Their {\em direct sum}, DSC in short, denoted by $\mathcal{D}=\bigoplus_{i\in I} C_i$, is the disjoint union of the $C_i$ such that for two elements $x, y \in \mathcal{D}$, $x\leq_\mathcal{D} y$ if and only if for some $i\in I$, $x, y\in P_i$ and $x\leq_{P_i} y$. Let $\mathcal{D}=\bigoplus_{i \in I}C_i$ be a DSC. Each chain $C_i$ is called a {\em component} of $\mathcal{D}$.
A component of $\mathcal{D}$ is \textit{trivial} if it consists of a single element. 
By a countable direct sum of chains $\mathcal{D}$ we mean that the number of components of  $\mathcal{D}$ is countable and that every component of $\mathcal{D}$ is countable.

\begin{example}\label{2.1}
$\mathcal{D}=\bigoplus_{\omega_1} \omega \oplus \bigoplus_\omega (\omega + 1) \oplus T$ where $T$ is the direct sum of the singletons of $\mathcal{D}$ with $|T|=\aleph_1$. 
\end{example}

\begin{example}\label{2.2}
$\mathcal{D}=\bigoplus_{n < \omega} \bigoplus_{\omega_n} C^n \oplus T$ where $T$ is the direct sum of the singletons of $\mathcal{D}$ with $|T|=\aleph_\omega$. 
\end{example}

\begin{example}\label{2.3}
$\mathcal{D}=\bigoplus_{\omega_1} C^1 \oplus \bigoplus_\omega C^3$. 
\end{example}

\begin{example}\label{2.4}
$\mathcal{D}=\bigoplus_{\omega_3} C^1 \oplus \bigoplus_{\omega_2} C^2 \oplus \bigoplus_{\omega_1} C^3 \oplus \bigoplus_{\omega_1} \omega$. 
\end{example}

\section{Structural Results} \label{Structural}

As the first step in counting the siblings of a DSC, we show that two equimorphic DSCs have the same number of components. 

\begin{lemma} \label{InjectiononI}
Let $\mathcal{D}=\bigoplus_{i\in I} C_i$ and $\mathcal{D}'=\bigoplus_{j\in J}C_j'$ be two direct sums of chains. Each embedding from $\mathcal{D}$ to $\mathcal{D}'$ induces an injection from I to J. In particular if $\mathcal{D}\approx \mathcal{D}'$, then $|I|=|J|$. 
\end{lemma}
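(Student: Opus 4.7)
The plan is to read off the map $\sigma : I \to J$ directly from a given embedding $f: \mathcal{D}\to \mathcal{D}'$, by assigning to each $i\in I$ the unique component of $\mathcal{D}'$ that contains $f(C_i)$, and then check injectivity by exploiting that an embedding between posets preserves incomparability as well as comparability.

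First I would verify that this map is well-defined, i.e.\ that for every $i\in I$ the image $f(C_i)$ is contained in a single component $C_j'$ of $\mathcal{D}'$. Since $C_i$ is a chain and $f$ is order-preserving, $f(C_i)$ is a chain in $\mathcal{D}'$; but in a direct sum of chains, any two comparable elements lie in the same component, so all of $f(C_i)$ must sit inside one component $C_{\sigma(i)}'$. (If one wanted to be pedantic about $|C_i|=1$, the definition still forces a unique component containing the single image point.)

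Next I would show $\sigma$ is injective. Suppose for contradiction that $\sigma(i)=\sigma(i')=j$ for distinct $i,i'\in I$. Pick any $x\in C_i$ and $y\in C_{i'}$, which exist since the components of a DSC are nonempty chains. Then $x\perp y$ in $\mathcal{D}$ because they lie in different components, whereas $f(x),f(y)\in C_j'$ are both in a single chain and are therefore comparable in $\mathcal{D}'$. This contradicts the fact that an embedding of posets preserves incomparability.

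For the final statement, if $\mathcal{D}\approx \mathcal{D}'$ then there are embeddings in both directions, inducing injections $I\to J$ and $J\to I$, and the Cantor--Schr\"oder--Bernstein theorem gives $|I|=|J|$. The only conceptual step is the observation that $f(C_i)$ lies in one component; everything else is a direct application of the definition of an embedding, so I would not expect any real obstacle.
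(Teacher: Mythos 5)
Your proposal is correct and follows essentially the same route as the paper: define the induced map by sending $i$ to the unique component containing $f(C_i)$, prove injectivity by noting that two incomparable elements cannot both land in a single chain, and conclude $|I|=|J|$ from the two injections. Your justification of well-definedness (comparable images must share a component) is if anything slightly more explicit than the paper's appeal to connectedness of $C_i$, but the argument is the same.
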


\begin{proof}
Let $f:\mathcal{D}\hookrightarrow\mathcal{D}'$ be an embedding. Let $i\in I$ be given. First note that since $C_i$ is connected, $f(C_i)\subseteq C_j$ for some $j\in J$. For $i\in I$ define $\hat{f}(i)=j$ where $f(C_i)\subseteq C_j$. Since the components of $\mathcal{D}'$ are disjoint, $\hat{f}$ is well-defined. Now suppose that for some $i\neq i'$, $\hat{f}(i)=\hat{f}(i')=j$. Then, $f:C_i\oplus C_{i'}\hookrightarrow C_j$ which is not possible because $C_i\oplus C_{i'}$ contains two incomparable elements. Thus, $\hat{f}$ is an injection from $I$ to $J$.  

Now let $f:\mathcal{D}\to \mathcal{D}'$ and $g:\mathcal{D}'\to\mathcal{D}$ be embeddings and  $\hat{f}:I\to J$ and $\hat{g}:J\to I$ be defined as above. We know that both $\hat{f}$ and $\hat{g}$ are injective. Since $I$ and $J$ are independent sets, we then have $|I|=|J|$ by the
Cantor-Schr\"{o}der-Bernstein theorem.
\end{proof}

The proof of Lemma \ref{InjectiononI} implies that a sibling of a direct sum of chains must be a direct sum of chains, and that
two direct sum of chains are isomorphic if and only if there is a bijection
between the index sets and an isomorphisms between corresponding
chains.

Let $\mathcal{D}$ be a DSC. 
We may represent $\mathcal{D}$ as $\bigoplus_{i\in I}C_i\oplus T$  where the $C_i$ are non-trivial components and $T$ is the direct sum of the singletons which is an antichain. Let $\mathcal{D}'$ be a sibling of $\mathcal{D}$. We notice that $\mathcal{D}'$ is isomorphic to an induced substructure of $\mathcal{D}$. Hence, $\mathcal{D}'=\bigoplus_{j\in J} C'_j\oplus T'$ where the $C_j'$ are non-trivial and $T'$ is the direct sum of the singletons. Clearly, a non-trivial component does not embed in a singleton. Thus, each $C_i$ embeds in some $C_j'$. By symmetry, each $C_j'$ embeds in some $C_i$. Therefore, $\bigoplus_{i\in I} C_i\approx \bigoplus_{j\in J} C_j'$ and by Lemma \ref{InjectiononI}, we have $|I|=|J|$, however,  as we will see in Lemma \ref{Increasingsequence}, it is possible to have $T$ not isomorphic to $T'$. As a case in point, we have $\bigoplus_{\omega}C^2 \approx\bigoplus_{\omega}C^2\oplus 1$.

\begin{lemma} \label{FinitenontrivialD}
Let $\mathcal{D}$ be a DSC containing only finitely many non-trivial components. If every component of $\mathcal{D}$ has just one sibling, then so does $\mathcal{D}$.  
\end{lemma}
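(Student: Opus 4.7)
\textbf{Proof plan for Lemma \ref{FinitenontrivialD}.}

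Let $\mathcal{D}'$ be a sibling of $\mathcal{D}$ and write $\mathcal{D}=C_{1}\oplus\cdots\oplus C_{n}\oplus T$ and $\mathcal{D}'=C_{1}'\oplus\cdots\oplus C_{m}'\oplus T'$ as in the paragraph preceding the lemma, with the $C_i,C_j'$ non-trivial and $T,T'$ the (anti)chains of singletons. The plan is to show that, after relabelling, $C_i\cong C_i'$ and $T\cong T'$.

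First I would isolate the non-trivial parts. By the remark preceding the lemma, $\bigoplus_{i}C_i\approx\bigoplus_{j}C_j'$, so Lemma \ref{InjectiononI} gives $m=n$ and the induced injections $\hat f,\hat g$ arising from fixed equimorphy embeddings $f:\mathcal{D}\hookrightarrow\mathcal{D}'$ and $g:\mathcal{D}'\hookrightarrow\mathcal{D}$ are bijections of $\{1,\dots,n\}$. Setting $\sigma=\hat g\circ\hat f$, a finite permutation, the composition $g\circ f$ embeds $C_i$ into $C_{\sigma(i)}$. Iterating along the cycle of $\sigma$ containing $i$ and using that $\sigma$ has finite order, every $C_{\sigma^k(i)}$ embeds into $C_i$, so $C_i\approx C_{\sigma(i)}$ and more importantly $C_i\approx C_{\hat f(i)}'\approx C_{\sigma(i)}$ with $C_{\hat f(i)}'$ sandwiched between $C_i$ and $C_{\sigma(i)}$. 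Since $Sib(C_i)=1$, equimorphy collapses to isomorphism, giving $C_{\hat f(i)}'\cong C_i$ for every $i$; hence $\bigoplus_{j}C_j'\cong\bigoplus_{i}C_i$ after relabelling.

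The step I expect to be the main point is handling the antichain remainder $T,T'$, since the comment preceding the lemma warns that $T$ and $T'$ can fail to match in general. Here finiteness of the non-trivial part rescues us: I would show that the embedding $f$ satisfies $f(T)\subseteq T'$. Indeed, take $t\in T$. If $f(t)$ lay in some $C_j'$, then by surjectivity of $\hat f$ we would have $j=\hat f(i)$ for some $i$, so $f(C_i)\subseteq C_j'$. But $C_j'$ is a chain, so $f(t)$ would be comparable to every element of $f(C_i)$, contradicting $t\perp C_i$ in $\mathcal{D}$ (which $f$ must preserve as an embedding). Therefore $f(t)\in T'$, giving $|T|\le|T'|$; symmetrically $|T'|\le|T|$, so $|T|=|T'|$ and $T\cong T'$.

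Combining the two paragraphs yields $\mathcal{D}\cong\mathcal{D}'$, establishing $Sib(\mathcal{D})=1$. The only delicate point is the orbit/cancellation trick of Step 2, which genuinely requires $n<\infty$ (so that $\sigma$ has finite order and $\hat f$ is surjective); the argument for the singletons in Step 3 is then a short but crucial consequence of that surjectivity.
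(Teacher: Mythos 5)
Your proposal is correct and follows essentially the same route as the paper: finiteness of the non-trivial part forces the induced map on non-trivial components to be a permutation, the orbit argument collapses equimorphy to isomorphism on each component, and surjectivity of that permutation forces singletons to land among singletons, giving $|T|=|T'|$. The only cosmetic difference is that you work with the pair of embeddings $f,g$ and their composite $\sigma=\hat g\circ\hat f$, whereas the paper takes $\mathcal{D}'\subseteq\mathcal{D}$ and a single self-embedding $f$ of $\mathcal{D}$ factoring through $\mathcal{D}'$; the substance is identical.
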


\begin{proof}
Let $\mathcal{D}=\bigoplus_{i < n}C_i\oplus T$ satisfy the conditions of the lemma where the $C_i$ are non-trivial and $T$ is the direct sum of the singletons. Let $f$ be an embedding of $\mathcal{D}$ and $\hat{f}$ be defined as in Lemma \ref{InjectiononI} which is injective. Let $i < n$ be given. Since $n$ is finite, by iteration and the fact that a non-trivial component does not embed in a singleton, we get an integer $k_i$ such that $\hat{f}^{k_i}(i)=i$. Now let $x\in T$ be given. If $f(x)\in C_i$ for some $i<n$, then picking some element $y\in C_{\hat{f}^{k_i-1}}$, we have $f(x), f(y)\in C_i$, a contradiction because $x\perp y$. Thus, $f(T)\subseteq T$.  This also implies that $f\left(\bigoplus_{i<n} C_i\right)\subseteq \bigoplus_{i<n} C_i$. 

Let $\mathcal{D}'\subseteq \mathcal{D}$ be a sibling of $\mathcal{D}$ and $f$ an embedding of $\mathcal{D}$ such that $f(\mathcal{D})\hookrightarrow \mathcal{D}'\hookrightarrow \mathcal{D}$. By the argument above, $\mathcal{D}'$ is of the form $\mathcal{D}'=\bigoplus_{i<n} C_i'\oplus T'$ where $f^{k_i}(C_i)\subseteq C_i' \subseteq C_i$ for each $i$ and $f(T)\subseteq T'\subseteq T$. Therefore, $T'\approx T$ and for each $i$, $C_i'\approx C_i$. Since $T$ and $T'$ are antichains, we have $T'\cong T$ and by assumption, for each $i<n$, $C_i'\cong C_i$.  Hence, $\mathcal{D}\cong \mathcal{D}'$ and  $Sib(\mathcal{D})=1$.  
\end{proof}

\begin{lemma} \label{Infinitesiblingcomponent}
Let $\mathcal{D}=\bigoplus_{i\in I}C_i$ be a DSC. Then $Sib(\mathcal{D})\geq \max \{Sib(C_i) : i\in I\}$. In particular, if for some $i$ we have $Sib(C_i)=\infty$, then $Sib(\mathcal{D})=\infty$.  
\end{lemma}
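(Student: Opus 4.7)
My plan is to produce many pairwise non-isomorphic siblings of $\mathcal{D}$ by holding all but one component fixed and swapping in siblings of that single component. Concretely, I fix some $i_0 \in I$, let $\kappa = Sib(C_{i_0})$, choose a family $\{D_\alpha : \alpha < \kappa\}$ of pairwise non-isomorphic siblings of $C_{i_0}$, and set $\mathcal{D}_\alpha := D_\alpha \oplus \bigoplus_{i \neq i_0} C_i$. The two things to check are then (i) each $\mathcal{D}_\alpha$ is a sibling of $\mathcal{D}$, and (ii) the $\mathcal{D}_\alpha$ are pairwise non-isomorphic. Since $i_0$ is arbitrary, this will give $Sib(\mathcal{D}) \geq Sib(C_{i_0})$ for every $i_0$, hence $Sib(\mathcal{D}) \geq \max\{Sib(C_i) : i \in I\}$, and the ``in particular'' clause is immediate.

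Item (i) is easy: mutual embeddings between $D_\alpha$ and $C_{i_0}$ extend by the identity on the unchanged components to mutual embeddings between $\mathcal{D}_\alpha$ and $\mathcal{D}$. For (ii) the key observation is that the isomorphism type of a DSC is determined by the multiset of isomorphism types of its components. Indeed, connected components of a poset are an isomorphism invariant, and the components of a DSC are exactly its constituent chains, so any isomorphism of DSCs induces a type-preserving bijection between their multisets of components. Thus if $\mathcal{M}$ denotes the multiset $\{[C_i] : i \in I,\ i \neq i_0\}$ of isomorphism types, then $\mathcal{D}_\alpha \cong \mathcal{D}_\beta$ amounts to the multiset equality $\mathcal{M} \cup \{[D_\alpha]\} = \mathcal{M} \cup \{[D_\beta]\}$, and multiset cancellation gives $[D_\alpha] = [D_\beta]$, i.e., $\alpha = \beta$.

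The only subtle step is the multiset cancellation in (ii), and it is the one place I expect to have to pay attention: it must still go through in the exceptional case where $[D_\alpha]$ or $[D_\beta]$ happens to coincide with $[C_j]$ for some $j \neq i_0$. This is handled by comparing multiplicities of the specific iso types $[D_\alpha]$ and $[D_\beta]$: on the left side the multiplicity of $[D_\alpha]$ is exactly one more than in $\mathcal{M}$, so the same must hold on the right, forcing $[D_\alpha] = [D_\beta]$. Once this is observed, the whole argument is short and the bound $Sib(\mathcal{D}) \geq \max\{Sib(C_i) : i \in I\}$ follows.
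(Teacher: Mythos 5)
Your construction differs from the paper's in one crucial way: you replace only the single component $C_{i_0}$ by $D_\alpha$, whereas the paper replaces \emph{every} component equimorphic to $C_{i_0}$ by the same sibling, and this difference is exactly where your argument breaks. The multiset-cancellation step in (ii) fails for infinite multisets: if $\mathcal{M}$ already contains $\aleph_0$ copies of the type $[D_\alpha]$, then the multiplicity of $[D_\alpha]$ in $\mathcal{M}\cup\{[D_\alpha]\}$ is $\aleph_0+1=\aleph_0$, not ``exactly one more than in $\mathcal{M}$'', so the subtle case you flag is not actually handled. Concretely, take $\mathcal{D}=\bigoplus_\omega \mathbb{Q}\oplus\bigoplus_\omega (1+\mathbb{Q}+1)$, let $C_{i_0}$ be one of the copies of $\mathbb{Q}$, and let $D_\alpha=\mathbb{Q}$ and $D_\beta=1+\mathbb{Q}+1$; these are non-isomorphic siblings of $\mathbb{Q}$, yet $\mathcal{D}_\alpha\cong\mathcal{D}\cong\mathcal{D}_\beta$, because replacing one of infinitely many copies of $\mathbb{Q}$ by $1+\mathbb{Q}+1$ does not change the multiset of component types. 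So your family $\{\mathcal{D}_\alpha\}$ need not be pairwise non-isomorphic and the bound $Sib(\mathcal{D})\geq Sib(C_{i_0})$ is not established.

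The repair is the paper's uniform substitution: define $\mathcal{D}_j$ by replacing \emph{all} components equimorphic to $C_k$ by the same sibling $C_{kj}$ simultaneously. Then every component of $\mathcal{D}_j$ equimorphic to $C_k$ is isomorphic to $C_{kj}$, so an isomorphism $\mathcal{D}_j\cong\mathcal{D}_{j'}$ (which by Lemma \ref{InjectiononI} induces a type-preserving bijection of components) must carry the copy of $C_{kj}$ at index $k$ onto a component of $\mathcal{D}_{j'}$ equimorphic to $C_k$, necessarily a copy of $C_{kj'}$, forcing $C_{kj}\cong C_{kj'}$ and hence $j=j'$. Your step (i) and your observation that isomorphisms of DSCs preserve the multiset of component types are both correct and are also what the paper uses; only the cancellation requires the uniform replacement.
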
 

\begin{proof}
Given $k\in I$, let $Sib(C_k)=\lambda$ and $\{C_{kj}\}_{j< \lambda}$ be a family of pairwise non-isomorphic siblings of $C_k$. For every $j< \lambda$, define $\mathcal{D}_j=\bigoplus_{i\in I}D_i$ where 
$$D_i=\begin{cases} 
C_i & \text{if}\   C_i\not\approx C_k \\
C_{kj} & \text{if} \ C_i\approx C_k.
\end{cases}$$
Since for each $i$, $C_i\approx D_i$, we have $\mathcal{D}\approx \mathcal{D}_j$. Now assume that for two distinct $j, j'<\lambda$ we have $\mathcal{D}_j\cong \mathcal{D}_{j'}$ by an isomorphism $f$. By Lemma \ref{InjectiononI}, $f$ induces a bijection $\hat{f}$ on $I$. We have $C_{kj}=D_k\cong D_{\hat{f}(k)}$. This also implies that $D_{\hat{f}(k)}=C_{kj'}$. But then we have $C_{kj}\cong C_{kj'}$, a contradiction. Therefore, 
The  $\mathcal{D}_j$ are pairwise non-isomorphic siblings of $\mathcal{D}$.  Hence, $Sib(\mathcal{D})\geq \lambda$. 
\end{proof}

\begin{lemma}  \label{Increasingsequence}
Suppose that a direct sum of chains $\mathcal{D}$ has countably many trivial components and that there is an infinite sequence $(C_n)_{n<\omega}$ of non-trivial components of $\mathcal{D}$ which is increasing w.r.t embeddability. Then $Sib(\mathcal{D})=\infty$.  
\end{lemma}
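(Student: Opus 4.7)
My plan is to construct, for each integer $k\geq 0$, a sibling $\mathcal{D}_k$ of $\mathcal{D}$ whose direct-sum decomposition has exactly $k$ trivial components. Since any isomorphism between two direct sums of chains induces a bijection on their sets of singleton components, the family $(\mathcal{D}_k)_{k\geq 0}$ will automatically be pairwise non-isomorphic, whence $Sib(\mathcal{D})\geq \aleph_0$ and so $Sib(\mathcal{D})=\infty$. To set this up, I write $\mathcal{D}=\bigoplus_{n<\omega}C_n\oplus F\oplus T$, where $(C_n)_{n<\omega}$ is the given increasing sequence, $F=\bigoplus_{j\in J}C_j$ collects the remaining non-trivial components of $\mathcal{D}$, and $T$ is the direct sum of all singletons (countable by hypothesis); then for each $k\geq 0$ I let
\[\mathcal{D}_k=\bigoplus_{n<\omega}C_{n+1}\oplus F\oplus T_k,\]
where $T_k$ is a fresh antichain of exactly $k$ points. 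Thus $\mathcal{D}_k$ has exactly $k$ trivial components.

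The main task is to show $\mathcal{D}_k\approx \mathcal{D}$ by producing embeddings in both directions, both obtained by shifting the increasing sequence to free up room for the singletons. For $\mathcal{D}_k\hookrightarrow \mathcal{D}$ I send each component $C_{n+1}$ of $\mathcal{D}_k$ into $C_{n+1+k}$ of $\mathcal{D}$ using the chain embedding $C_{n+1}\hookrightarrow C_{n+1+k}$ supplied by the increasing sequence, send each $C_j$ of $F$ to itself, and send the $k$ singletons of $T_k$ to single elements of the now-unused components $C_0,C_1,\ldots,C_{k-1}$ of $\mathcal{D}$. For the converse $\mathcal{D}\hookrightarrow \mathcal{D}_k$ I apply the same trick with extra slack: send each $C_n$ of $\mathcal{D}$ into $C_{2n+1}$ of $\mathcal{D}_k$ via $C_n\hookrightarrow C_{2n+1}$, send each $C_j$ of $F$ to itself, and inject the singletons of $T$ into single elements of the still-unused even-indexed sequence components $C_2,C_4,C_6,\ldots$ of $\mathcal{D}_k$, one singleton per component. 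Each of these maps is component-injective and an order-embedding on every component, hence an embedding of posets, which together witness $\mathcal{D}_k\approx\mathcal{D}$.

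The principal obstacle is the second embedding: all of $T$ must be fit into $\mathcal{D}_k$ even though $\mathcal{D}_k$ was designed to have only $k$ singletons. The double shift $C_n\mapsto C_{2n+1}$ is exactly what makes this possible, since it liberates infinitely many non-trivial components of $\mathcal{D}_k$ to absorb the singletons of $T$, and the hypothesis that $T$ is countable is precisely what ensures that those countably many free components suffice. Once both embeddings are in hand, the non-isomorphism of the $\mathcal{D}_k$ is immediate from their differing numbers of singleton components, completing the argument.
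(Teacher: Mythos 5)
Your proposal is correct and follows essentially the same strategy as the paper: use the increasing sequence to shift components and absorb the countably many singletons into non-trivial components, then distinguish the resulting siblings by their number of trivial components, which is an isomorphism invariant. The only cosmetic difference is that you drop $C_0$ and verify both embeddings explicitly, whereas the paper first passes to the non-trivial part $\mathcal{D}'$ and then adjoins finite antichains $A^m$; the content is the same.
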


\begin{proof}
Let $\mathcal{N}$ be the direct sum of the non-trivial components of $\mathcal{D}$ distinct from the $C_n$ and $T$ the direct sum of the singletons. We have $\mathcal{D}=\bigoplus_{n<\omega}C_n\oplus \mathcal{N} \oplus T$. By assumption, $|T|\leq \aleph_0$.  Therefore, $T=A^\lambda$ for some $\lambda\leq \aleph_0$. Since $\bigoplus_{n<\omega} C_n$ embeds in $\bigoplus_{n<\omega} C_{2n}$ and $A^\lambda$ embeds in $\bigoplus_{n<\omega} C_{2n+1}$, it follows that $\mathcal{D}$ embeds in $\bigoplus_{n<\omega} C_n\oplus \mathcal{N}=\mathcal{D}'$ which is the direct sum of the non-trivial components of $\mathcal{D}$. Now by a similar argument, for every $m\geq 1$, $\mathcal{D}_m:=\mathcal{D}'\oplus A^m$ is a sibling of $\mathcal{D}'$ and consequently of $\mathcal{D}$. Since each $\mathcal{D}_m$ has exactly $m$ singletons, they are pairwise non-isomorphic. Hence,  $Sib(\mathcal{D})=\infty$.  
\end{proof}

A {\em quasi-order} is a binary relation $Q=(V,\leq)$ where $\leq$ is reflexive and transitive. A quasi-order $Q=(V,\leq)$ is called a \textit{well-quasi-order} (wqo) if for any infinite sequence $(q_n)_{n<\omega}$ of elements of $Q$, there exist $m<n$ such that $q_m\leq q_n$. By Ramsey's Theorem, any infinite sequence of elements of a wqo contains an infinite increasing subsequence (\cite{TH2}). 
It is easy to verify that the class of chains is quasi-ordered under embeddability. Laver proved that the embeddability relation is more well behaved in the case of countable chains. The following theorem is an answer to a conjecture of Fra\"{i}ss\'{e}.

\begin{theorem} [Laver \cite{LAV1}] \label{Laver}
The class of countable chains is wqo under embeddability. 
\end{theorem}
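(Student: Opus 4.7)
The natural route is via Nash-Williams' theory of \emph{better-quasi-orders} (bqo), the standard strengthening of wqo designed precisely so that infinitary sum and labeled-tree constructions preserve the good behavior. I would first recall the definition: a quasi-order $Q$ is bqo if every Borel (equivalently, every ``barrier-indexed'') map $f:B\to Q$ is good, meaning there exist $s\triangleleft t$ in the barrier with $f(s)\leq f(t)$. The two facts I would take off the shelf are (i) bqo implies wqo, and (ii) if $Q$ is bqo then the class $Q^*$ of transfinite sequences from $Q$, quasi-ordered by embeddability (with labels respected by $\leq_Q$), is again bqo. Both are due to Nash-Williams and are the workhorse of the argument.

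Next I would introduce the class $\mathcal{S}$ of countable \emph{scattered} chains (those not embedding $\eta=(\mathbb{Q},\leq)$), and use Hausdorff's structure theorem: $\mathcal{S}$ is the smallest class containing $\mathbf{1}$ and closed under sums indexed by $\omega$ and by $\omega^*$. This gives an ordinal rank $\rho(C)$ on scattered chains. I would prove by induction on $\rho$ that $\mathcal{S}$ is bqo under embeddability. The successor step says: if $\mathcal{S}_{<\alpha}$ is bqo, then $\omega$-sums and $\omega^*$-sums of its elements form a bqo, which is a direct application of closure of bqo under $Q^*$ applied to $Q=\mathcal{S}_{<\alpha}$ (with the sum embedding translating to a labeled sequence embedding). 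The limit step is immediate since a union of an increasing chain of bqo subclasses is bqo.

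To pass from scattered to arbitrary countable chains I would use that every countable chain $C$ admits a \emph{condensation} $C/\!\sim$ where $x\sim y$ iff the interval between them is scattered; the quotient is either a point (in which case $C\in\mathcal{S}$) or embeds in $\eta$, and each equivalence class is a scattered chain. Thus every countable chain is presentable as a $\eta$-labeled (equivalently, rational-indexed) sum of scattered chains. Since $\eta$ itself, together with any countable bqo used as labels, gives rise via the $Q^*$-style construction to a bqo of such labeled sums, and embeddings of chains induce label-respecting embeddings of these representations, we conclude that the whole class of countable chains is bqo, hence wqo.

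The main obstacle is making the final reduction rigorous: condensations are not unique embeddings-of-representations, so one must check that an abstract chain embedding $C\hookrightarrow C'$ can be upgraded to an embedding of the scattered pieces respecting the $\eta$-backbone, up to passing to a suitable sub-representation. This is where Laver's original argument is delicate, and I would spend most of the work precisely there, after assembling the bqo toolkit (the $Q^*$ theorem and Hausdorff's hierarchy) as black boxes.
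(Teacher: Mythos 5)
The paper offers no proof of this statement: it is quoted verbatim from Laver's 1971 paper as an external black box, so what you are really attempting is a reconstruction of Laver's theorem itself. Your outline does follow the architecture of Laver's actual argument (Nash-Williams' better-quasi-order machinery, Hausdorff's hierarchy of scattered chains, and the condensation reducing countable chains to $\eta$-indexed sums of scattered ones), but as written it contains two genuine gaps, one of which is an outright false step.

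First, your limit step --- ``a union of an increasing chain of bqo subclasses is bqo'' --- is false: an infinite antichain is the increasing union of its finite initial segments, each of which is trivially bqo, yet the union is not even wqo. So the naive transfinite induction on Hausdorff rank does not close at limit ordinals. The correct route is Nash-Williams' \emph{minimal bad array} argument applied to the whole class of scattered chains at once: assume a bad array exists, choose one minimal with respect to rank, and use the decomposition of each value as an $\omega$- or $\omega^{*}$-sum of lower-rank pieces together with the transfinite-sequence theorem to contradict minimality. Second, the passage from scattered to arbitrary countable chains is not ``a $Q^{*}$-style construction'': Nash-Williams' theorem concerns transfinite (well-ordered) sequences, whereas the condensation produces $\eta$-indexed sums of scattered chains, and proving that the class of such sums over a bqo label set is again bqo is precisely the hard core of Laver's paper (his treatment of $\sigma$-scattered types), not an off-the-shelf lemma. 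You flag this as ``the main obstacle'' and defer it, which is honest, but it means the proposal is a roadmap rather than a proof; in the context of this paper the appropriate move is simply to cite Laver, as the author does.
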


By Theorem \ref{Laver} we can deduce the following.

\begin{proposition} \label{Countabletrivial}
Suppose that a direct sum of chains $\mathcal{D}$ contains infinitely many non-trivial countable components. If the number of the singletons of $\mathcal{D}$ is countable, then $Sib(\mathcal{D})=\infty$. 
\end{proposition}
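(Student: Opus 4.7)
The plan is to reduce to Lemma \ref{Increasingsequence} by extracting an increasing sequence of non-trivial components via Laver's theorem. The hypothesis already gives the first condition of Lemma \ref{Increasingsequence}, namely that the trivial components of $\mathcal{D}$ form an at most countable antichain, so all that remains is to produce the required infinite $\leq$-increasing sequence of non-trivial components.

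Since $\mathcal{D}$ has infinitely many non-trivial components, I would first enumerate an arbitrary infinite subfamily $(D_n)_{n<\omega}$ of pairwise distinct non-trivial components of $\mathcal{D}$. By assumption each $D_n$ is a countable chain, so the sequence $(D_n)_{n<\omega}$ lives in the class of countable chains quasi-ordered by embeddability. Theorem \ref{Laver} asserts that this quasi-order is a wqo, and the Ramsey-theoretic consequence cited just before Theorem \ref{Laver} then guarantees an infinite subsequence $(D_{n_k})_{k<\omega}$ that is increasing with respect to embeddability. Relabelling $C_k := D_{n_k}$ gives an infinite sequence of non-trivial components of $\mathcal{D}$ that is increasing w.r.t.\ embeddability.

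At this point the hypotheses of Lemma \ref{Increasingsequence} are verified: $\mathcal{D}$ has countably many trivial components (by assumption on the singletons) and admits the increasing sequence $(C_k)_{k<\omega}$ just constructed. Applying that lemma directly yields $Sib(\mathcal{D})=\infty$.

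I do not anticipate a genuine obstacle here; the statement is essentially a packaging of Lemma \ref{Increasingsequence} with Laver's wqo theorem. The only point requiring a moment's care is making sure the chosen $C_k$ are genuinely distinct components of $\mathcal{D}$ (not merely isomorphic copies), so that the family $\mathcal{N}$ of ``other'' non-trivial components used inside the proof of Lemma \ref{Increasingsequence} makes sense; this is why I start by picking the $D_n$ from distinct indices of the direct sum before passing to a subsequence.
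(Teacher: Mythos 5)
Your proposal is correct and matches the paper's proof essentially verbatim: the paper likewise applies Theorem \ref{Laver} (via the Ramsey-theoretic consequence of wqo) to extract an infinite increasing sequence from the non-trivial countable components and then invokes Lemma \ref{Increasingsequence}. Your extra care about choosing distinct components is a reasonable clarification but not a departure from the paper's argument.
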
  

\begin{proof}
Let $\mathcal{D}$ be a DSC with infinitely many non-trivial countable components. Then by Theorem \ref{Laver}, there is an infinite sequence of non-trivial countable components of $\mathcal{D}$ which is increasing w.r.t embeddability. If the number of singletons of $\mathcal{D}$ is countable, then the statement follows by Lemma \ref{Increasingsequence}.  
\end{proof}

\begin{corollary} \label{Pcountableinfinite}
If $\mathcal{D}$ is a countable DSC with infinitely many non-trivial components, then $Sib(\mathcal{D})=\infty$.  
\end{corollary}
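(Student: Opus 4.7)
The plan is to apply Proposition \ref{Countabletrivial} essentially verbatim. First I would unpack the definition of a countable DSC: it specifies both that the collection of components is countable and that each component is itself a countable chain. Consequently, the set of trivial components (singletons) has cardinality at most $\aleph_0$, and, by hypothesis, there are infinitely many non-trivial components, all of which are countable. This is precisely the hypothesis list of Proposition \ref{Countabletrivial}, so the conclusion $Sib(\mathcal{D}) = \infty$ follows immediately.

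For the reader's convenience, I would briefly retrace the underlying construction: Laver's Theorem \ref{Laver} tells us that the countable non-trivial components, being countably many elements in a wqo under embeddability, contain an infinite subsequence $(C_n)_{n<\omega}$ that is increasing w.r.t.\ embeddability. Together with the countability of the set of singletons, this hands us Lemma \ref{Increasingsequence}, which then produces the infinite family $\{\mathcal{D}' \oplus A^m\}_{m\geq 1}$ of pairwise non-isomorphic siblings of $\mathcal{D}$, distinguished simply by the finite number $m$ of extra singletons.

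I do not foresee any real obstacle here, as the corollary is a direct specialization of Proposition \ref{Countabletrivial} to the fully countable setting; the only verification is that the two countability hypotheses of that proposition are both subsumed by the notion of a countable DSC, which is purely definitional.
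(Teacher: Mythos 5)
Your proposal is correct and matches the paper's (implicit) argument exactly: the corollary is stated without proof precisely because a countable DSC, by definition, has countably many components each of which is countable, so the hypotheses of Proposition \ref{Countabletrivial} are satisfied verbatim. Your optional retracing of Laver's theorem and Lemma \ref{Increasingsequence} is accurate but not needed beyond the citation.
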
 

For a countable DSC, we can deduce the alternate Thomass\'e conjecture. 

\begin{theorem} \label{DichotomyPcountable} 
If $\mathcal{D}$ is a countable DSC, then $Sib(\mathcal{D})=1$ or $\infty$.
\end{theorem}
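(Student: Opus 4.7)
The plan is to reduce the statement to a simple case analysis on the number of non-trivial components of $\mathcal{D}$, after which every case is handled by a previously proved result.

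First I would split into two cases based on whether the number of non-trivial components is infinite. If $\mathcal{D}$ has infinitely many non-trivial components, then since $\mathcal{D}$ is countable, Corollary \ref{Pcountableinfinite} immediately yields $Sib(\mathcal{D})=\infty$. So the remaining work is the case in which $\mathcal{D}$ has only finitely many non-trivial components.

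In that case, write $\mathcal{D}=\bigoplus_{i<n} C_i \oplus T$ where each $C_i$ is a non-trivial countable chain and $T$ is the direct sum of the (possibly zero or infinitely many) singletons. I would now invoke Theorem \ref{Chaindich}: each non-trivial component $C_i$ is a countable chain, so $Sib(C_i)\in\{1,\infty\}$. Subcase (a): if some $C_i$ satisfies $Sib(C_i)=\infty$, then Lemma \ref{Infinitesiblingcomponent} immediately gives $Sib(\mathcal{D})=\infty$. Subcase (b): if every non-trivial component $C_i$ has $Sib(C_i)=1$, then since trivial components (singletons) automatically have sibling number $1$, \emph{every} component of $\mathcal{D}$ has sibling number $1$, and Lemma \ref{FinitenontrivialD} applies to conclude $Sib(\mathcal{D})=1$.

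Combining the three subcases yields the desired dichotomy. Strictly speaking there is no serious obstacle: the theorem is essentially an aggregation of Theorem \ref{Chaindich} with Lemmas \ref{FinitenontrivialD}, \ref{Infinitesiblingcomponent} and Corollary \ref{Pcountableinfinite}. The only thing worth checking carefully is that the hypothesis of Lemma \ref{FinitenontrivialD} (\emph{every} component has exactly one sibling) is satisfied in subcase (b), which is immediate once one observes that the singleton components trivially satisfy it.
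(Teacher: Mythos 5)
Your proposal is correct and follows essentially the same route as the paper: the paper's proof also splits on whether the number of non-trivial components is finite, citing Corollary \ref{Pcountableinfinite} in the infinite case and Lemmas \ref{FinitenontrivialD} and \ref{Infinitesiblingcomponent} in the finite case. The only difference is that you explicitly invoke Theorem \ref{Chaindich} to justify that the subcase split (every component has one sibling versus some component has infinitely many) is exhaustive, a step the paper leaves implicit.
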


\begin{proof}
Let $\mathcal{D}$ be a countable DSC. If $\mathcal{D}$ contains only finitely many non-trivial components, then $\mathcal{D}$ has one or infinitely many siblings by Lemmas \ref{FinitenontrivialD} and \ref{Infinitesiblingcomponent}. If $\mathcal{D}$ has infinitely many non-trivial components, then considering Corollary \ref{Pcountableinfinite} the result follows. 
\end{proof}

\section{Siblings of Countable DSCs} \label{CountableDSC}

Theorem \ref{DichotomyPcountable} asserts that the alternate Thomass\'{e} conjecture holds for a countable DSC. 
In order to prove Thomass\'{e}'s Conjecture for a countable direct sum of chains $\mathcal{D}$, we need to determine the sibling number of $\mathcal{D}$. We do this by cases. Recall that Thomass\'{e}'s conjecture has been proven for countable chains by Theorem \ref{Chaindich}. If $\mathcal{D}$ is finite or contains only finitely many non-trivial components each of which has one sibling, then $\mathcal{D}$ has one sibling by Lemma \ref{FinitenontrivialD}. If some component of $\mathcal{D}$ has continuum many siblings, the same holds for $\mathcal{D}$ itself by Lemma \ref{Infinitesiblingcomponent}. 

\begin{lemma} \label{Finitealeph0}
Let $\mathcal{D}=\bigoplus_{i < k} C_i \oplus \mathcal{N}$, $k > 0$, be countable where for every $ i < k$, $Sib(C_i)=\aleph_0$ and $\mathcal{N}$ is a DSC with only finitely many non-trivial components each of which has one sibling. Then $Sib(\mathcal{D})=\aleph_0$. 
\end{lemma}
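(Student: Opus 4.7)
The plan is to sandwich $Sib(\mathcal{D})$ between $\aleph_0$ from below and above. The lower bound is immediate from Lemma \ref{Infinitesiblingcomponent} applied to any $C_i$. The content lies in the upper bound, for which I would analyze the possible structure of a sibling $\mathcal{D}'$ of $\mathcal{D}$.

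Write $\mathcal{N} = \bigoplus_{j<m} N_j \oplus T_{\mathcal{N}}$ with each $N_j$ non-trivial and $T_{\mathcal{N}}$ an antichain of singletons, and let $T$ denote the full antichain of singletons of $\mathcal{D}$, so that $\mathcal{D} = \mathcal{D}^* \oplus T$ with $\mathcal{D}^* := \bigoplus_{i<k} C_i \oplus \bigoplus_{j<m} N_j$. For any sibling $\mathcal{D}' = (\mathcal{D}')^* \oplus T'$ decomposed analogously, a non-trivial component is connected and contains two distinct comparable elements, so under any embedding $\mathcal{D} \hookrightarrow \mathcal{D}'$ its image sits inside a non-trivial component of $\mathcal{D}'$. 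Hence embeddings restrict to mutual embeddings between $\mathcal{D}^*$ and $(\mathcal{D}')^*$, giving $\mathcal{D}^* \approx (\mathcal{D}')^*$. By Lemma \ref{InjectiononI}, $(\mathcal{D}')^*$ has exactly $k+m$ non-trivial components. Combined with $|I| = |J|$ from Lemma \ref{InjectiononI} applied to the full structures, this forces $|T'| = |T|$ and so $T' \cong T$ as antichains of equal cardinality.

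Next I would show that the multiset of equimorphy classes of non-trivial components is preserved. Given mutual embeddings $f: \mathcal{D}^* \hookrightarrow (\mathcal{D}')^*$ and $g: (\mathcal{D}')^* \hookrightarrow \mathcal{D}^*$, the induced maps $\hat{f}, \hat{g}$ are injections between finite index sets of equal size, hence bijections, and $\hat{g} \circ \hat{f}$ is a permutation of the $k+m$ components of $\mathcal{D}^*$. For each component $X$ of $\mathcal{D}^*$, tracing the orbit of its index under this permutation produces an alternating cycle of embeddings $X \hookrightarrow Y \hookrightarrow X'' \hookrightarrow \cdots \hookrightarrow X$ between components of $\mathcal{D}^*$ and $(\mathcal{D}')^*$; composing the tail witnesses an embedding $Y \hookrightarrow X$, so $X \approx Y$ for the corresponding $Y = Y_{\hat{f}(X)}$. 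Thus $(\mathcal{D}')^*$'s components realize the same multiset of equimorphy classes as $\mathcal{D}^*$'s. Since $Sib(C_i) = \aleph_0 \neq 1 = Sib(N_j)$, no $C_i$ is equimorphic to any $N_j$, so each equimorphy class in this multiset is purely of $C$-type or purely of $N$-type.

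To count: an isomorphism type of sibling $\mathcal{D}'$ is determined by choosing, for each of the $k+m$ non-trivial slots, a representative (up to isomorphism) from the sibling class of the matched component of $\mathcal{D}$, together with the fixed antichain $T$. Each $C$-type slot admits $\aleph_0$ choices; each $N$-type slot admits $1$. Since there are only finitely many slots, the total count is at most $\aleph_0$, giving $Sib(\mathcal{D}) \leq \aleph_0$. Combined with the lower bound, $Sib(\mathcal{D}) = \aleph_0$. The main obstacle is the cycle-tracing argument of the previous paragraph — establishing that mutual embeddings of finite DSCs preserve the multiset of equimorphy classes of components. Everything before and after is routine bookkeeping once that is in hand.
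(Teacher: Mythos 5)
Your proof is correct and follows essentially the same route as the paper's: the lower bound via Lemma \ref{Infinitesiblingcomponent}, and the upper bound by using Lemma \ref{InjectiononI} to turn mutual embeddings into a permutation of the finitely many non-trivial components, concluding that each component of a sibling is equimorphic to a matched component and then counting the finitely many slots, each admitting at most $\aleph_0$ isomorphism types. Your explicit cycle-tracing argument is precisely the justification behind the paper's one-line assertion that all components in an orbit of the induced permutation are equimorphic, and your separate treatment of the singleton part $T$ via $|I|=|J|$ replaces the paper's appeal to $Sib(\mathcal{N})=1$ without changing the substance.
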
 

\begin{proof}
Let $\mathcal{D}'$ be a substructure of $\mathcal{D}$ such that $\mathcal{D}\approx \mathcal{D}'$. Then there is an embedding $\phi$ of $\mathcal{D}$ such that $\phi(\mathcal{D})\hookrightarrow \mathcal{D}' \hookrightarrow \mathcal{D}$. Since $\mathcal{D}$ has only finitely many non-trivial components, by Lemma \ref{InjectiononI} the embedding $\phi$ induces a finite permutation $\sigma$ on the non-trivial components. Moreover, all components in an orbit of $\sigma$ must be equimorphic.  Since every component of $\mathcal{N}$ has one sibling, and the sibling number of each $C_i$ is $\aleph_0$, we have $\sigma=\sigma_1\cdots \sigma_m\sigma_{m+1}\cdots \sigma_n$ where each $\sigma_i$, $1 \leq i \leq m$, is a permutation of the $C_i$ which are pairwise equimorphic and every $\sigma_i$, $m+1 \leq i \leq n$, is a permutation of the non-trivial components of $\mathcal{N}$ which are pairwise equimorphic. It follows that $\mathcal{D}'$ is of the form $\bigoplus_{j < k} C_j' \oplus \mathcal{N}'$ where each $C_j'$ is a sibling of some $C_i$ and $\mathcal{N} \cong \mathcal{N}'$ because $Sib(\mathcal{N})=1$ by Lemma \ref{FinitenontrivialD}. For every $ i < k$, let $\{C_{in}\}_{n < \omega}$ be a family of pairwise non-isomorphic siblings of $C_i$. For every function $f \in \mathbb{N}^k$ define
$$\mathcal{D}_f= \bigoplus_{i < k} C_{if(i)} \oplus \mathcal{N}.$$
By what we discussed, $\mathcal{D}'$ is isomorphic to some element of $\mathcal{S}=\{ \mathcal{D}_f : f \in \mathbb{N}^k \}$. For $f\neq g \in \mathbb{N}^k$ we might have $\mathcal{D}_f \cong \mathcal{D}_g$. It follows that $Sib(\mathcal{D})\leq |\mathcal{S}|=\aleph_0$. On the other hand, since $0 < k$, by Lemma \ref{Infinitesiblingcomponent} we have $Sib(\mathcal{D})\geq \aleph_0$.  Thus, $Sib(\mathcal{D})=\aleph_0$.  
\end{proof}

In case there are infinitely many finite non-trivial components, either there is a bound for their sizes or not.  Let $\mathcal{D}$ be a direct sum of chains. A sequence $(C_n)_{n < \omega}$ of components of $\mathcal{D}$ is called \textit{bounded} if there is $M < \omega$ such that $|C_n| < M$ for every $n < \omega$. Otherwise, it is \textit{unbounded}. We call a DSC {\em bounded} if there is a bound on all its components. For a positive integer $n$, we denote a chain of size $n$ by $C^n$.

\begin{lemma} \label{Countablebounded}
Let $\mathcal{D}$ be a countable DSC with infinitely many non-trivial components. If $\mathcal{D}$ is bounded, then $Sib(\mathcal{D})=\aleph_0$.
\end{lemma}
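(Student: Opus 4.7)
The plan is to establish the two inequalities $Sib(\mathcal{D}) \geq \aleph_0$ and $Sib(\mathcal{D}) \leq \aleph_0$ separately. The first is immediate from Corollary \ref{Pcountableinfinite}, since $\mathcal{D}$ is a countable DSC with infinitely many non-trivial components. So the real content is the upper bound.

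For the upper bound, I would first exploit the bound: if every component of $\mathcal{D}$ has size at most $M < \omega$, then up to isomorphism each non-trivial component is one of the finite chains $C^2, C^3, \ldots, C^M$, and each trivial component is $C^1$. Consequently $\mathcal{D}$ is determined up to isomorphism by the finite tuple $(m_1, m_2, \ldots, m_M)$ of cardinals, where $m_n$ counts the components of $\mathcal{D}$ isomorphic to $C^n$. Because $\mathcal{D}$ is countable, each $m_n$ lies in $\{0, 1, 2, \ldots, \aleph_0\}$.

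Next I would observe that any sibling $\mathcal{D}'$ of $\mathcal{D}$ is isomorphic to an induced substructure of $\mathcal{D}$, so $\mathcal{D}'$ is itself a direct sum of chains, each of which is a subchain of some component of $\mathcal{D}$, hence one of $C^1, \ldots, C^M$. Thus $\mathcal{D}'$ is also encoded by a tuple $(m_1', m_2', \ldots, m_M')$ with entries in $\{0, 1, 2, \ldots, \aleph_0\}$. Since the set of such $M$-tuples is a finite product of countable sets, it has cardinality $\aleph_0$, which gives $Sib(\mathcal{D}) \leq \aleph_0$.

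There is no real obstacle here beyond being careful about two points: (i) that a sibling inherits the DSC structure (which follows from the fact that subposets of DSCs are DSCs, with components being subchains of the original components), and (ii) that the isomorphism type of a DSC whose components all come from the finite list $\{C^1, \ldots, C^M\}$ really is determined by the multiplicity tuple, which is straightforward since direct sum is commutative and associative on isomorphism types. The rest is just assembling the two inequalities to conclude $Sib(\mathcal{D}) = \aleph_0$.
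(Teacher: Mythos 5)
Your proof is correct, and for the upper bound it is actually more direct than the paper's. The paper first normalises $\mathcal{D}$: it picks the largest size $n$ occurring infinitely often among the components, replaces $\mathcal{D}$ by the equimorphic $\hat{\mathcal{D}}=\bigoplus_\omega C^n\oplus\mathcal{F}$ (with $\mathcal{F}$ the finitely many larger components), and then argues that every sibling has the form $\bar{\mathcal{D}}_{t_1,\dots,t_{n-1}}\oplus\mathcal{F}$ with each $t_i\leq\omega$; the same explicit family also furnishes the $\aleph_0$ pairwise non-isomorphic siblings for the lower bound. You instead observe that any sibling, being an induced substructure of $\mathcal{D}$, is a DSC whose components all lie in the finite list $C^1,\dots,C^M$, so its isomorphism type is captured by an $M$-tuple of multiplicities from $\{0,1,\dots\}\cup\{\aleph_0\}$, of which there are only $\aleph_0$; this skips the normalisation entirely and avoids the slightly delicate claim that $\mathcal{F}$ survives intact in every sibling. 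For the lower bound you cite Corollary \ref{Pcountableinfinite}, which is legitimate (it precedes this lemma and does not depend on it), whereas the paper re-derives it by hand here; the only thing the paper's explicit construction buys is independence from Laver's theorem, though in the bounded case even that is unnecessary, since some size occurs infinitely often and Lemma \ref{Increasingsequence} applies directly. Both arguments are sound; yours is the cleaner write-up of the same underlying counting idea.
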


\begin{proof}
From the assumption we conclude that all components of $\mathcal{D}$ are finite. Let $\{C_i\}_{i\in I}$ be the set of non-trivial components of $\mathcal{D}$ and $M=\sup_i|C_i|$. Since $I$ is infinite, there exists some $2\leq n \leq M$ such that the number of components $C_i$ with $|C_i|=n$ is infinite. Without loss of generality let $n$ be maximum among such integers. Then for every $n < k \leq M$ the number of chains with size $k$ is finite. Set $\hat{\mathcal{D}}:=\bigoplus_\omega C^n\oplus \mathcal{F}$ where $\mathcal{F}$ is the finite direct sum of the components of $\mathcal{D}$ with size greater than $n$. Since the number of chains $C_i$ of size $n$ is infinite, $\mathcal{D}\approx \hat{\mathcal{D}}$.  Now, define
$$\Bar{\mathcal{D}}_{t_1, t_2, \ldots, t_{n-1}} :=  \bigoplus_{t_1} C^1 \oplus \bigoplus_{t_2} C^2 \oplus \cdots \oplus \bigoplus_{t_{n-1}} C^{n-1}\oplus \bigoplus_\omega C^n$$  
where for each $1\leq m \leq n-1$, $t_m \leq\omega$. For every $f \in \mathbb{N}^n$, define $\mathcal{D}_f := \Bar{\mathcal{D}}_{f(1), \ldots , f(n-1)}  \oplus \mathcal{F}$ which is a sibling of $\hat{\mathcal{D}}$ and consequently of $\mathcal{D}$. Let $f \neq g$ be such that $f(m)\neq g(m)$ for some $0 < m < n$. Then $\mathcal{D}_f$ and $\mathcal{D}_g$ have different numbers of components of size $m$ establishing that $\mathcal{D}_f \ncong \mathcal{D}_g$. Therefore, the collection $\mathcal{S}=\{ \mathcal{D}_f : f \in \mathbb{N}^n\}$ provides a countably infinite set of pairwise non-isomorphic siblings of $\mathcal{D}$ meaning that $Sib(\mathcal{D})\geq \aleph_0$. Now let $\mathcal{D}'$ be a sibling of $\hat{\mathcal{D}}$. Then $\mathcal{D}'\cong \Bar{\mathcal{D}}_{t_1,t_2,\ldots,t_{n-1}}\oplus\mathcal{F}$ for some $t_1, t_2, \ldots, t_{n-1}\leq\omega$. Since we have $\aleph_0$ such siblings, it follows that $Sib(\mathcal{D})=Sib(\hat{\mathcal{D}})\leq \aleph_0$. Hence, $Sib(\mathcal{D})=\aleph_0$. 
\end{proof}

\begin{lemma} \label{Strictlyinc}
Let $\mathcal{D}$ be a countable DSC. If some sibling of $\mathcal{D}$ contains a strictly increasing sequence of components, then  $Sib(\mathcal{D})=2^{\aleph_0}$. 
\end{lemma}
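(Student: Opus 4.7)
The plan is to exhibit $2^{\aleph_0}$ pairwise non-isomorphic siblings of $\mathcal{D}$; the upper bound $Sib(\mathcal{D})\le 2^{\aleph_0}$ is automatic from countability of $\mathcal{D}$. Since equimorphic structures have the same siblings up to isomorphism, I may replace $\mathcal{D}$ by the sibling supplied by the hypothesis and assume that $\mathcal{D}$ itself contains a strictly increasing sequence $(C_n)_{n<\omega}$ of components. Strict increase forces the $C_n$ to be pairwise non-equimorphic, hence pairwise non-isomorphic.

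Write $\mathcal{D}=\bigoplus_n C_n\oplus \mathcal{R}_1\oplus \mathcal{R}_2$, where $\mathcal{R}_1$ is the direct sum of those components of $\mathcal{D}$ outside the sequence that are isomorphic to some $C_n$, and $\mathcal{R}_2$ the direct sum of the remaining components. For each infinite $S\subseteq\omega$ define
$$\mathcal{D}_S := \bigoplus_{n\in S}\bigl(\aleph_0\cdot C_n\bigr)\oplus \mathcal{R}_2.$$
In $\mathcal{D}_S$ the multiplicity of the isomorphism class $[C_n]$ equals $\aleph_0$ for $n\in S$ and $0$ for $n\notin S$ (no component of $\mathcal{R}_2$ is isomorphic to any $C_n$), while multiplicities of all other iso classes coincide with those in $\mathcal{R}_2$ and are independent of $S$. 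Hence $S$ is recoverable from the iso class of $\mathcal{D}_S$, so distinct infinite $S$ produce pairwise non-isomorphic $\mathcal{D}_S$, yielding $2^{\aleph_0}$ candidate siblings.

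The technical heart of the proof is verifying $\mathcal{D}_S\approx \mathcal{D}$ for every infinite $S$. Both directions reduce to packing countably many $C_n$-isomorphic chains into a ``staircase'' of slots subject to the constraint that a copy of $C_n$ may be placed into a slot $C_m$ only if $m\ge n$; this constraint is satisfiable precisely because $(C_n)$ is strictly increasing. For $\mathcal{D}\hookrightarrow \mathcal{D}_S$, I send $\mathcal{R}_2$ identically and route each of the countably many $C_n$-iso components of $\bigoplus_m C_m\oplus \mathcal{R}_1$ into one of the $\aleph_0$ slots of some $C_m$ in $\mathcal{D}_S$ with $m\in S$ and $m\ge n$ (possible since $S$ is infinite). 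For $\mathcal{D}_S\hookrightarrow \mathcal{D}$, I again send $\mathcal{R}_2$ identically and pack $\bigoplus_{n\in S}\aleph_0\cdot C_n$ into $\bigoplus_m C_m$ by a greedy diagonal enumeration of the pairs $(n,k)$ with $n\in S$, $k<\omega$, each time assigning the smallest unused slot $C_m$ with $m\ge n$; since only finitely many slots are consumed at each step and cofinitely many $m\ge n$ remain in $\omega$, the process never fails.

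Combining, the family $\{\mathcal{D}_S : S\subseteq\omega,\ |S|=\aleph_0\}$ supplies $2^{\aleph_0}$ pairwise non-isomorphic siblings of $\mathcal{D}$, giving $Sib(\mathcal{D})=2^{\aleph_0}$. I expect the main obstacle to be the bookkeeping in the embeddings when $\mathcal{R}_1$ is large, in particular when some iso class $[C_n]$ already has $\aleph_0$ multiplicity inside $\mathcal{D}$; the split $\mathcal{R}=\mathcal{R}_1\oplus \mathcal{R}_2$ is designed precisely so that $\mathcal{D}_S$ has clean multiplicities ($\aleph_0$ or $0$) on the iso classes $[C_n]$, while the strictly increasing sequence provides enough head-room to re-absorb $\mathcal{R}_1$ on the forward embedding.
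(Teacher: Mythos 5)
Your argument is correct and follows essentially the same route as the paper's: both index a continuum-sized family of siblings by the infinite subsets of $\omega$, use the strictly increasing sequence to greedily re-pack every component that fits into the staircase, and distinguish the resulting structures by which isomorphism classes $[C_n]$ occur. The only cosmetic difference is that the paper keeps a single copy of each selected $C_n$ (so its $\mathcal{Q}_J$ is a substructure of the given sibling and one direction of the equimorphy is free, with the complement $H$ taken to be the components embeddable in no $C_m$ rather than those not isomorphic to any $C_n$), whereas you inflate to $\aleph_0$ copies and must verify both embeddings; both versions go through.
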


\begin{proof} 
Let $(C_n)_{n < \omega}$ be a strictly increasing sequence of components of $\mathcal{Q}$ where $\mathcal{Q} \approx \mathcal{D}$. It suffices to show that $Sib(\mathcal{Q})=2^{\aleph_0}$. 

Let $\mathcal{R}$ be the family of components $D$ of $\mathcal{Q}$ for which there exists some $m<\omega$ such that $D$ embeds in $C_m$. Let $\{D_1, D_2, \ldots\}$ be an enumeration of $\mathcal{R}$.  Denote by $H$ the direct sum of components of $\mathcal{Q}$ other than those ones contained in $\mathcal{R}$. Therefore, we can write $\mathcal{Q}=\bigoplus_{m<\omega}D_m \oplus H$. For every infinite $J\subseteq \omega$, define $\mathcal{Q}_J := \bigoplus_{n\in J}C_n \oplus H$. Since $\mathcal{R}$ contains the $C_n$, $\mathcal{Q}_J$ is a substructure of $\mathcal{Q}$, so we have $\mathcal{Q}_J \hookrightarrow \mathcal{Q}$. For every $m<\omega$, $D_m$ embeds in some $C_k$, and since $(C_n)_{n < \omega}$ is increasing, $D_m$ embeds in every $C_j$ where $k \leq j$. Since $J$ is infinite, there are infinitely many $n \in J$ such that $D_m \hookrightarrow C_n$. Let $f: \omega\to J$ be a function defined in this way: $f(0)=n_0$ where $n_0$ is the least element in $J$ for which $D_0\hookrightarrow C_{n_0}$; and for $m> 0$, $f(m)=n_m$ where $n_{m-1} < n_m$ is the least element of $J\setminus \{n_0, n_1, \ldots, n_{m-1}\}$ for which $D_m\hookrightarrow C_{n_m}$. Since the least element of each subset of $J$ is unique, the function $f$ is well-defined and since for $m\neq k$ we have $n_m\neq n_k$, $f$ is injective. The function $f$ induces an embedding from $\mathcal{Q}$ to $\mathcal{Q}_J$. Hence,  $\mathcal{Q} \approx \mathcal{Q}_J$. Now, for $J_1 \neq J_2 \subseteq \omega$ infinite, let $j$ be the least element in their symmetric difference. Without loss of generality assume that $j \in J_1$. Then $\mathcal{Q}_{J_1}$ has a component isomorphic to $C_j$ while $\mathcal{Q}_{J_2}$ does not because the sequence $(C_n)_{n < \omega}$ is strictly increasing, establishing that  $\mathcal{Q}_{J_1} \ncong \mathcal{Q}_{J_2}$. Since there are continuum many infinite subsets of $\omega$, we have $Sib(\mathcal{Q})\geq 2^{\aleph_0}$ and since $\mathcal{Q}$ is countable, we get  $Sib(\mathcal{Q})=2^{\aleph_0}$. 
\end{proof}

As a case in point, let $\mathcal{D}_{id}$ be the direct sum of exactly one chain of size $n$ for every $n < \omega$, that is $\mathcal{D}_{id}=\bigoplus_{n<\omega} C^n$. We see that although all components of $\mathcal{D}_{id}$ are finite, there is no bound for their sizes. 
Indeed, $\mathcal{D}_{id}$ itself has a strictly increasing sequence of finite chains. Therefore, $Sib(\mathcal{D}_{id})=2^{\aleph_0}$. Further, for a countable $\mathcal{D}$, if we can embed $\mathcal{D} \oplus \mathcal{D}_{id}$ in $\mathcal{D}$, then by applying Lemma \ref{Strictlyinc}, $Sib(\mathcal{D})=2^{\aleph_0}$.

\begin{lemma} \label{Increasingunbounded}
Let $\mathcal{D}$ be a countable DSC. Then, $\mathcal{D}$ contains an increasing and unbounded sequence of components if and only if $\mathcal{D} \oplus \mathcal{D}_{id} \approx \mathcal{D}$ where $\mathcal{D}_{id}=\bigoplus_{n < \omega} C^n$. 
\end{lemma}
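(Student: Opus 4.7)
The plan is to handle the two directions separately, both of which are relatively direct once the statement is unpacked.

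For the direction $(\Rightarrow)$, I would fix an increasing unbounded sequence of components $(C_n)_{n<\omega}$ of $\mathcal{D}$ and write $\mathcal{D}=\bigoplus_{n<\omega}C_n\oplus\mathcal{M}$, where $\mathcal{M}$ collects all remaining components. The embedding $\mathcal{D}\hookrightarrow\mathcal{D}\oplus\mathcal{D}_{id}$ is immediate, so the work is in showing $\mathcal{D}\oplus\mathcal{D}_{id}\hookrightarrow\mathcal{D}$. My plan is to partition $\omega$ into two disjoint infinite subsets $E$ and $O$ and use them in parallel: the even-indexed part $(C_n)_{n\in E}$ remains increasing, so listing $E$ as $e_0<e_1<\cdots$ one has $e_k\geq k$ and hence $C_k\hookrightarrow C_{e_k}$, giving $\bigoplus_{n<\omega}C_n\hookrightarrow\bigoplus_{n\in E}C_n$. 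Meanwhile, $(C_n)_{n\in O}$ is still unbounded in size, so one can choose distinct $n_k\in O$ with $|C_{n_k}|\geq k$ and thereby embed $\mathcal{D}_{id}=\bigoplus_k C^k$ into $\bigoplus_{n\in O}C_n$. Combining these with the identity on $\mathcal{M}$ produces the desired embedding.

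For $(\Leftarrow)$, suppose $\mathcal{D}\oplus\mathcal{D}_{id}\approx\mathcal{D}$ and fix an embedding $\phi:\mathcal{D}\oplus\mathcal{D}_{id}\hookrightarrow\mathcal{D}$. By Lemma \ref{InjectiononI}, $\phi$ induces an injection on the indexing sets of components, so the components $C^n$ of the $\mathcal{D}_{id}$ summand are sent into pairwise distinct components $D_n$ of $\mathcal{D}$, each satisfying $|D_n|\geq n$. Since $\mathcal{D}$ is a countable DSC, every $D_n$ is a countable chain, so Laver's Theorem \ref{Laver} applies to $(D_n)_{n<\omega}$ and yields an infinite subsequence $(D_{n_k})_{k<\omega}$ that is increasing with respect to embeddability. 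Because $|D_{n_k}|\geq n_k\to\infty$, this subsequence is automatically unbounded, providing the required sequence of components of $\mathcal{D}$.

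I do not expect a serious obstacle here; the only care needed is to keep straight that ``increasing'' refers to embeddability while ``unbounded'' refers to cardinality, and to use Lemma \ref{InjectiononI} to guarantee that the components $D_n$ absorbing the copies of $C^n$ are pairwise distinct. The real content of the reverse direction is Laver's theorem, which is already available, and the forward direction is a routine splitting argument of the type used in the proof of Lemma \ref{Increasingsequence}.
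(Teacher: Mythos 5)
Your proposal is correct and follows essentially the same route as the paper: the forward direction is the same even/odd splitting (the paper uses $\bigoplus C_{2n}$ to absorb $\bigoplus C_n$ and $\bigoplus C_{2n+1}$ to absorb $\mathcal{D}_{id}$), relying as you do on the fact that an increasing sequence has non-decreasing sizes so every infinite subsequence stays unbounded. For the converse the paper merely asserts the conclusion in one line; your explicit use of Lemma \ref{InjectiononI} to get distinct components $D_n$ with $|D_n|\geq n$ and of Laver's Theorem \ref{Laver} to extract an increasing (hence increasing and unbounded) subsequence is exactly the intended justification, just written out.
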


\begin{proof}
First suppose that $\mathcal{D}$ contains an increasing and unbounded sequence $(C_n)_{n < \omega}$ of components. Then, $\mathcal{D}_{id} \hookrightarrow \bigoplus_{n < \omega} C_{2n+1}$. Let $\mathcal{F}$ be the direct sum of the components of $\mathcal{D}$ other than the $C_n$.  We have
$$\mathcal{D} \oplus \mathcal{D}_{id}= \bigoplus_{n < \omega} C_n \oplus \mathcal{D}_{id} \oplus \mathcal{F} \hookrightarrow \bigoplus_{n < \omega} C_{2n} \oplus \bigoplus_{n < \omega} C_{2n+1} \oplus \mathcal{F} = \mathcal{D}.$$
Therefore, $\mathcal{D} \oplus \mathcal{D}_{id} \approx \mathcal{D}$. 

For the converse, if $\mathcal{D} \oplus \mathcal{D}_{id} \approx \mathcal{D}$, then since $\mathcal{D}_{id}$ embeds in $\mathcal{D}$, $\mathcal{D}$ contains an increasing and unbounded sequence of components. 
\end{proof} 

In fact, when a direct sum of chains $\mathcal{D}$ contains an increasing and unbounded sequence of components, the sibling $\mathcal{D}\oplus \mathcal{D}_{id}$ of $\mathcal{D}$ contains a strictly increasing sequence of components.  
 
\begin{proposition} \label{Sibincreasingunbounded}
Let $\mathcal{D}$ be a countable DSC with an increasing and unbounded sequence of components, then $Sib(\mathcal{D})=2^{\aleph_0}$. 
\end{proposition}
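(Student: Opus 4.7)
The plan is to combine Lemma \ref{Increasingunbounded} with Lemma \ref{Strictlyinc} in a direct way; indeed the author's own remark immediately preceding the proposition essentially spells out the key observation. First I would use Lemma \ref{Increasingunbounded}: since $\mathcal{D}$ contains an increasing and unbounded sequence of components, $\mathcal{D} \oplus \mathcal{D}_{id} \approx \mathcal{D}$, so $\mathcal{D}' := \mathcal{D} \oplus \mathcal{D}_{id}$ is a sibling of $\mathcal{D}$.

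Next I would note that $\mathcal{D}_{id} = \bigoplus_{n<\omega} C^n$ is by definition the direct sum of exactly one finite chain of each size $n < \omega$, so the components $C^1, C^2, C^3, \ldots$ of $\mathcal{D}_{id}$ form a strictly increasing sequence under embeddability (a chain of length $n$ properly embeds in any chain of length $m > n$, and a strictly longer chain cannot embed back). These chains appear as components of the sibling $\mathcal{D}'$, hence $\mathcal{D}'$ contains a strictly increasing sequence of components.

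Finally I would apply Lemma \ref{Strictlyinc} to $\mathcal{D}$: since some sibling (namely $\mathcal{D}'$) of $\mathcal{D}$ contains a strictly increasing sequence of components, we conclude $Sib(\mathcal{D}) = 2^{\aleph_0}$.

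There is no real obstacle here; the proposition is essentially a corollary of the two preceding lemmas, and the only point requiring even a moment's attention is to verify that the components of $\mathcal{D}_{id}$ are genuinely \emph{strictly} increasing, not merely increasing, which is immediate from the fact that a finite chain embeds in another finite chain precisely when its length is not greater.
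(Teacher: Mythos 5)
Your proposal is correct and follows exactly the paper's own argument: invoke Lemma \ref{Increasingunbounded} to get the sibling $\mathcal{D}\oplus\mathcal{D}_{id}$, observe that the components of $\mathcal{D}_{id}$ form a strictly increasing sequence, and apply Lemma \ref{Strictlyinc}. The only difference is that you spell out the strictness check explicitly, which the paper leaves to the remark preceding the proposition.
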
 

\begin{proof}
By Lemma \ref{Increasingunbounded}, $\mathcal{D} \approx \mathcal{D} \oplus \mathcal{D}_{id}$ and by Lemma \ref{Strictlyinc}, $Sib(\mathcal{D})=2^{\aleph_0}$. 
\end{proof}

For example, let $\mathcal{D}=\bigoplus_\omega \omega$. Then $\mathcal{D}$ contains an increasing and unbounded sequence of components. Hence, $Sib(\mathcal{D})=2^{\aleph_0}$. 
Now we are ready to state the main theorem of this section asserting that Thomass\'{e}'s conjecture holds for a countable DSC. 

\begin{theorem}  \label{ThomasseP}
If $\mathcal{D}$ is a countable DSC, then $Sib(\mathcal{D})=1$ or $\aleph_0$ or $2^{\aleph_0}$. 
\end{theorem}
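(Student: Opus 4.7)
The plan is to glue together the case analysis carried out in the preceding lemmas via the dichotomy ``finitely many vs.\ infinitely many non-trivial components.'' A countable DSC $\mathcal{D}$ falls into exactly one of these two cases, and the earlier results cover each possibility, so the proof will be a clean case split with short justifications.

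First I would treat the case in which $\mathcal{D}$ has only finitely many non-trivial components, writing $\mathcal{D}=\bigoplus_{i<n}C_i\oplus T$ with $T$ the antichain of singletons. Each $C_i$ is a countable chain, so Theorem \ref{Chaindich} gives $Sib(C_i)\in\{1,\aleph_0,2^{\aleph_0}\}$. If some $C_i$ has $Sib(C_i)=2^{\aleph_0}$, then Lemma \ref{Infinitesiblingcomponent} combined with the countability of $\mathcal{D}$ yields $Sib(\mathcal{D})=2^{\aleph_0}$. Otherwise every $Sib(C_i)\in\{1,\aleph_0\}$; let $k$ count the components with $Sib(C_i)=\aleph_0$ and lump the rest together with $T$ into a DSC $\mathcal{N}$ whose non-trivial components all have a unique sibling. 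If $k=0$, Lemma \ref{FinitenontrivialD} gives $Sib(\mathcal{D})=1$; if $k\geq 1$, Lemma \ref{Finitealeph0} applies to $\bigoplus_{i<k}C_i\oplus\mathcal{N}$ and gives $Sib(\mathcal{D})=\aleph_0$.

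Next I would treat the case in which $\mathcal{D}$ has infinitely many non-trivial components. If $\mathcal{D}$ is bounded, Lemma \ref{Countablebounded} delivers $Sib(\mathcal{D})=\aleph_0$ immediately. If $\mathcal{D}$ is unbounded, I would produce an increasing and unbounded sequence of components as follows: pick pairwise distinct components $D_n$ with $|D_n|\geq n$; by Theorem \ref{Laver} the class of countable chains is wqo, so the sequence $(D_n)_{n<\omega}$ admits an infinite increasing subsequence $(D_{n_k})_k$ with respect to embeddability. Since an embedding between chains cannot decrease cardinality, $|D_{n_k}|$ is non-decreasing in $k$, and the lower bound $|D_{n_k}|\geq n_k\to\infty$ ensures unboundedness. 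Proposition \ref{Sibincreasingunbounded} then gives $Sib(\mathcal{D})=2^{\aleph_0}$.

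The main obstacle is modest: every case is essentially already prepared in an earlier lemma, and the only step that requires real work is the extraction of an increasing and unbounded subsequence in the unbounded infinite-components case. This is a direct application of Laver's theorem together with the elementary observation that chain-embeddings do not decrease cardinality, so I do not foresee any serious difficulty beyond carefully verifying that $\mathcal{N}$ in the finite-components case does satisfy the hypotheses of Lemma \ref{Finitealeph0}.
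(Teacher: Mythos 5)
Your Case 1 and the bounded half of Case 2 track the paper's proof exactly and are fine. The problem is the unbounded half of Case 2, where you claim to ``pick pairwise distinct components $D_n$ with $|D_n|\geq n$.'' Unboundedness of $\mathcal{D}$ only guarantees that for every $M$ \emph{some} component has size at least $M$; your greedy selection needs the much stronger statement that for every $n$ there are \emph{infinitely many} components of size at least $n$. Consider $\mathcal{D}=\omega\oplus\bigoplus_\omega C^2$: it is countable, has infinitely many non-trivial components, and is unbounded (one component is infinite), yet only one component has size $\geq 3$, so no pairwise distinct $D_n$ with $|D_n|\geq n$ exist. Worse, the conclusion you would draw is false for this $\mathcal{D}$: every sibling is of the form $\omega\oplus\bigoplus_\omega C^2\oplus\bigoplus_t C^1$ with $t\leq\omega$, so $Sib(\mathcal{D})=\aleph_0$, not $2^{\aleph_0}$. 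The correct dividing line in Case 2 is therefore not ``bounded versus unbounded'' but whether $\mathcal{D}$ contains an increasing \emph{and} unbounded sequence of pairwise distinct components (equivalently, by Lemma \ref{Increasingunbounded}, whether $\mathcal{D}\oplus\mathcal{D}_{id}\approx\mathcal{D}$); Proposition \ref{Sibincreasingunbounded} applies only under that hypothesis.

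In fairness, the paper's own proof of Theorem \ref{ThomasseP} splits Case 2 into exactly the two subcases ``$\sup_n|C_n|<\omega$'' and ``there exists an increasing and unbounded subsequence'' without arguing that they exhaust all possibilities, and the example above falls into neither; so you have not missed something the paper settles --- rather, you have made explicit (and incorrect) a bridge that the paper leaves implicit. The residual case --- $\mathcal{D}$ unbounded but with only finitely many components of size $\geq n_0$ for some $n_0$, i.e., a finite direct sum of large chains attached to an infinite bounded part --- is genuinely not covered by Lemmas \ref{Countablebounded} and \ref{Finitealeph0} as stated (Corollary \ref{Pcountableinfinite} gives $Sib(\mathcal{D})\geq\aleph_0$ there, but pinning the value to $\aleph_0$ or $2^{\aleph_0}$ requires an argument in the spirit of the paper's open Conjecture \ref{Generalaleph0}). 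You should either supply that argument or at least isolate the missing case honestly rather than absorbing it into a false extraction claim.
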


\begin{proof}
Let $\mathcal{D}$ be a countable DSC. 

\noindent \textbf{Case 1:} $\mathcal{D}$ has finitely many non-trivial components. 

\textbf{Subcase 1.1 :} All components of $\mathcal{D}$ have one sibling. Then $Sib(\mathcal{D})=1$ by Lemma \ref{FinitenontrivialD}. 

\textbf{Subcase 1.2 :} $\mathcal{D}$ contains some components with sibling number $\aleph_0$ and all other components have just one sibling. Then $Sib(\mathcal{D})=\aleph_0$ by Lemma \ref{Finitealeph0}.  

\textbf{Subcase 1.3 :} $\mathcal{D}$ contains some component with the sibling number $2^{\aleph_0}$. Then $Sib(\mathcal{D})=2^{\aleph_0}$ by Lemma \ref{Infinitesiblingcomponent} and this fact that $Sib(\mathcal{D}) \leq 2^{\aleph_0}$. 

\noindent \textbf{Case 2 :} $\mathcal{D}$ has infinitely many non-trivial components $(C_n)_{n < \omega}$.

\textbf{Subcase 2.1 :} $\sup_n |C_n| < \omega$. Then $Sib(\mathcal{D})=\aleph_0$ by Lemma \ref{Countablebounded}. 

\textbf{Subcase 2.2 :} There exists an increasing and unbounded subsequence of $(C_n)_{n < \omega}$. Then, by Proposition \ref{Sibincreasingunbounded}, we have $Sib(\mathcal{D})= 2^{\aleph_0}$. 
\end{proof} 

We are also interested in classifying those countable direct sums of chains with sibling number $\aleph_0$. Conjecture \ref{Generalaleph0} gives the general form of such direct sums of chains; however, its proof relies on a positive answer to the following conjecture. 

\begin{conjecture} \label{Sumaleph0}
Let $\mathcal{D}= \mathcal{D}_1 \oplus \mathcal{D}_2$ where each $\mathcal{D}_i$ is a countable DSC with $Sib(\mathcal{D}_i)=\aleph_0$. Then $Sib(\mathcal{D}) \leq \aleph_0$.  
\end{conjecture}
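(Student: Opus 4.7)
The plan is to case-split on the structures of $\mathcal{D}_1$ and $\mathcal{D}_2$ using the classification implicit in the proof of Theorem \ref{ThomasseP}: a countable DSC with sibling number $\aleph_0$ has one of two shapes. \emph{Type (A)}: finitely many non-trivial components, at least one with $\aleph_0$ siblings and all others with exactly one sibling (Subcase 1.2). \emph{Type (B)}: infinitely many non-trivial components, all finite chains of sizes bounded by some integer $M$ (Subcase 2.1). First I would dispose of the homogeneous cases. If both $\mathcal{D}_i$ are of type (A), then $\mathcal{D}$ still has only finitely many non-trivial components with sibling numbers in $\{1, \aleph_0\}$, so Lemma \ref{Finitealeph0} yields $Sib(\mathcal{D}) \leq \aleph_0$. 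If both are of type (B), then $\mathcal{D}$ has infinitely many non-trivial components, all finite and bounded by $\max(M_1, M_2)$, so Lemma \ref{Countablebounded} yields $Sib(\mathcal{D}) = \aleph_0$.

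The heart of the proof is the mixed case, say $\mathcal{D}_1$ of type (A) and $\mathcal{D}_2$ of type (B). Write $\mathcal{D}_1 = \bigoplus_{i < k} N_i \oplus T_1$ and $\mathcal{D}_2 = \bigoplus_{n < \omega} F_n \oplus T_2$ with $|F_n| \leq M$, and classify each $N_i$ as \emph{large} (infinite, or finite of size greater than $M$) or \emph{small} (finite of size at most $M$). I would show that every sibling $\mathcal{D}'$ of $\mathcal{D}$ is determined up to isomorphism by three pieces of countable data: (i) for each large $N_i$, a sibling of $N_i$, giving finitely many factors each with at most $\aleph_0$ values; (ii) a sibling of the bounded DSC $\mathcal{B} := \bigoplus_{n < \omega} F_n \oplus \bigoplus_{i \text{ small}} N_i$, with at most $\aleph_0$ values by Lemma \ref{Countablebounded}; and (iii) the cardinality $\lambda \in \{0, 1, \ldots, \aleph_0\}$ of the antichain of singletons of $\mathcal{D}'$. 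Multiplying, this gives $Sib(\mathcal{D}) \leq \aleph_0$.

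To justify the decomposition, recall that each non-trivial component of the substructure $\mathcal{D}'$ is a subchain of a unique non-trivial component of $\mathcal{D}$, and distinct non-trivial components of $\mathcal{D}'$ arise from distinct components of $\mathcal{D}$ (since a subchain of a single chain is itself connected, so it accounts for at most one component). An iteration argument modeled on the proof of Lemma \ref{FinitenontrivialD}, applied to the finite set of large $N_i$'s, shows that the induced map $\hat{f}$ acts as a permutation with finite orbits on these components and that orbit members are pairwise equimorphic; consequently, the subchain of each large $N_i$ appearing in $\mathcal{D}'$ is equimorphic to $N_i$ and hence is a sibling of $N_i$. The remaining non-trivial components of $\mathcal{D}'$ come from small $N_i$'s and from $F_n$'s, and each has size at most $M$, so collectively they form a sibling of $\mathcal{B}$, to which Lemma \ref{Countablebounded} applies.

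The main obstacle I anticipate is making the large/small separation rigorous inside an arbitrary sibling $\mathcal{D}'$. One must verify, in the forward direction, that a large $N_i$ cannot embed into any subchain of an $F_n$ or of a small $N_j$ (clear by size), and in the reverse direction, that no subchain of a bounded-finite component of $\mathcal{D}$ can be a sibling of a large $N_i$ (again by size, since each non-trivial component of $\mathcal{D}'$ is a subchain of a single component of $\mathcal{D}$). Once this separation is clean, the counting is a straightforward combination of Lemmas \ref{Finitealeph0} and \ref{Countablebounded}.
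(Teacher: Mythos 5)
First, for calibration: the paper does not prove this statement. It is posed as Conjecture \ref{Sumaleph0} and explicitly left open (Conjecture \ref{Generalaleph0} is stated as conditional on it), so your attempt has to stand entirely on its own.

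It does not, and the failure is at the very first step. You base the whole argument on the claim that a countable DSC with sibling number $\aleph_0$ must be of Type (A) or Type (B), ``the classification implicit in the proof of Theorem \ref{ThomasseP}.'' That classification is not established by that proof and is in fact false. Consider $\mathcal{D}_1=\omega\oplus\bigoplus_\omega C^2$. It has infinitely many non-trivial components, so it is not Type (A); it is not bounded, so it is not Type (B); and it also escapes the case analysis of Theorem \ref{ThomasseP} itself, since $\sup_n|C_n|=\aleph_0$ rules out Subcase 2.1, while no increasing unbounded subsequence of components exists (any such sequence would have to contain $\omega$ followed by infinitely many further components into which $\omega$ embeds, and there are none; any sequence avoiding $\omega$ is bounded by $2$). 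Yet $Sib(\mathcal{D}_1)=\aleph_0$: any sibling realized as a substructure $\mathcal{D}'$ with $\mathcal{D}_1\hookrightarrow\mathcal{D}'\hookrightarrow\mathcal{D}_1$ must meet the component $\omega$ in an infinite, hence isomorphic, subchain and must retain $\aleph_0$ components of size $2$, so the siblings are exactly the $\omega\oplus\bigoplus_\omega C^2\oplus A^\lambda$ with $\lambda\leq\aleph_0$. Consequently your case split does not cover all legal inputs to the conjecture (take $\mathcal{D}_1=\omega\oplus\bigoplus_\omega C^2$ and $\mathcal{D}_2=\bigoplus_\omega C^2$), and the missing shape --- finitely many infinite components forming no increasing sequence, together with an infinite bounded family --- is exactly the shape your ``mixed case'' machinery is built for. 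The honest route is to prove that mixed-case lemma for this shape directly, not to derive the shape from a dichotomy that does not hold.

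There is also a secondary, more local problem inside the mixed case. Your item (ii) claims that the components of $\mathcal{D}'$ lying inside the bounded part $\mathcal{B}$ form a sibling of $\mathcal{B}$. But the witnessing embedding $f$ with $f(\mathcal{D})\subseteq\mathcal{D}'\subseteq\mathcal{D}$ may divert finitely many components of $\mathcal{B}$ into the subchains of the large $N_i$; what you actually obtain is $\mathcal{B}\setminus F\hookrightarrow R\hookrightarrow\mathcal{B}$ for some finite set $F$ of components, and $\mathcal{B}$ need not embed back into $\mathcal{B}\setminus F$ (take $\mathcal{B}=C^3\oplus\bigoplus_\omega C^2$ and $F=\{C^3\}$), so $R$ need not be a sibling of $\mathcal{B}$. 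This particular defect is repairable for the purpose of an upper bound --- any substructure of a countable bounded DSC is determined up to isomorphism by the finitely many cardinals $\mu_1,\ldots,\mu_M\leq\aleph_0$ counting its components of each size, so the bounded part of $\mathcal{D}'$ ranges over at most $\aleph_0$ isomorphism types regardless --- but as written the decomposition (i)--(iii) is not justified. The first gap, by contrast, is structural: until you correctly enumerate the possible shapes of a countable DSC with $\aleph_0$ siblings (or avoid needing to), the proof does not go through.
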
 
 

Assuming that the statement of the conjecture above holds, it also implies that when $\mathcal{D}=\bigoplus_{i < k} \mathcal{D}_i$ and $Sib(\mathcal{D}_i)=\aleph_0$ for every $i < k$, then $Sib(\mathcal{D})\leq \aleph_0$.

\begin{conjecture} \label{Generalaleph0}
Let $\mathcal{D}=\mathcal{D}_1 \oplus \mathcal{D}_2 \oplus \mathcal{D}_3$ be a countable DSC where 
\begin{enumerate}
    \item $\mathcal{D}_1$ is a bounded DSC,
    \item $\mathcal{D}_2$ is a non-empty and finite direct sum of chains  with sibling number $\aleph_0$, and
    \item $\mathcal{D}_3$ is a direct sum of chains with finitely many non-trivial components, each of which has one sibling. 
\end{enumerate}
   Then $Sib(\mathcal{D})=\aleph_0$. 
\end{conjecture}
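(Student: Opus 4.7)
The plan is to prove $Sib(\mathcal{D})=\aleph_0$ by verifying the two inequalities $Sib(\mathcal{D}) \geq \aleph_0$ and $Sib(\mathcal{D}) \leq \aleph_0$ separately. The lower bound is immediate: since $\mathcal{D}_2$ is non-empty and each of its (finitely many, necessarily infinite) components $C_i$ satisfies $Sib(C_i)=\aleph_0$, Lemma \ref{Infinitesiblingcomponent} gives $Sib(\mathcal{D}) \geq \aleph_0$. So everything to do lies in the upper bound.

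For the upper bound I would split on the number of non-trivial components of $\mathcal{D}_1$. If $\mathcal{D}_1$ has only finitely many non-trivial components, then because $\mathcal{D}_1$ is bounded each of these is a finite chain and therefore has exactly one sibling. In this subcase $\mathcal{N} := \mathcal{D}_1 \oplus \mathcal{D}_3$ is a DSC with finitely many non-trivial components each of which has one sibling, so writing $\mathcal{D} = \bigoplus_{i<k} C_i \oplus \mathcal{N}$ where the $C_i$ are the components of $\mathcal{D}_2$, Lemma \ref{Finitealeph0} gives $Sib(\mathcal{D}) = \aleph_0$ immediately. If instead $\mathcal{D}_1$ has infinitely many non-trivial components, Lemma \ref{Countablebounded} gives $Sib(\mathcal{D}_1) = \aleph_0$, and Lemma \ref{Finitealeph0} applied to $\mathcal{D}_2 \oplus \mathcal{D}_3$ (taking $\mathcal{N} = \mathcal{D}_3$) gives $Sib(\mathcal{D}_2 \oplus \mathcal{D}_3) = \aleph_0$. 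Writing $\mathcal{D} = \mathcal{D}_1 \oplus (\mathcal{D}_2 \oplus \mathcal{D}_3)$ as a sum of two countable DSCs each with sibling number $\aleph_0$ and invoking Conjecture \ref{Sumaleph0} then delivers $Sib(\mathcal{D}) \leq \aleph_0$.

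The hard part, and the reason this result is stated only as a conjecture, is this final invocation of Conjecture \ref{Sumaleph0} in the second subcase. A direct approach without it appears to fail because an embedding of $\mathcal{D}_1 \oplus \mathcal{D}_2 \oplus \mathcal{D}_3$ into itself need not respect the decomposition: components from $\mathcal{D}_1$ can in principle land in infinite components of $\mathcal{D}_2$, and the permutation argument from Lemma \ref{Finitealeph0}, which crucially exploits that non-trivial components form only finitely many equimorphism orbits, does not transparently generalise to the infinitely many orbits that may arise from $\mathcal{D}_1$. A proof of Conjecture \ref{Sumaleph0} would presumably have to organise the non-trivial components of each summand by equimorphism class, track how the countably many sibling options within each class interact under cross-summand embeddings, and show that only countably many isomorphism types of resulting siblings can appear.
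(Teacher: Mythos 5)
Your argument is, like the paper's own sketch, conditional on Conjecture \ref{Sumaleph0}, and in outline it follows the same route: lower bound from Lemma \ref{Infinitesiblingcomponent} via the components of $\mathcal{D}_2$, upper bound by decomposing $\mathcal{D}$ and invoking Conjecture \ref{Sumaleph0}. Where you differ is in how you reduce to the exact hypotheses of that conjecture, and your version is actually tighter. The paper's sketch records $Sib(\mathcal{D}_1)=1$ or $\aleph_0$ and $Sib(\mathcal{D}_3)=1$ and then asserts that Conjecture \ref{Sumaleph0} yields $Sib(\mathcal{D})\leq\aleph_0$, even though that conjecture as stated only covers a sum of two summands \emph{each} with sibling number $\aleph_0$; the summands with sibling number $1$ are left to an implicit extension. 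Your case split repairs this: when $\mathcal{D}_1$ has finitely many non-trivial components you absorb $\mathcal{D}_1\oplus\mathcal{D}_3$ into the $\mathcal{N}$ of Lemma \ref{Finitealeph0} and avoid Conjecture \ref{Sumaleph0} altogether, and when $\mathcal{D}_1$ has infinitely many you first apply Lemma \ref{Finitealeph0} to $\mathcal{D}_2\oplus\mathcal{D}_3$ so that both summands of $\mathcal{D}_1\oplus(\mathcal{D}_2\oplus\mathcal{D}_3)$ genuinely have sibling number $\aleph_0$, which is precisely the form the conjecture requires. Your closing diagnosis of why a direct argument fails (embeddings need not respect the decomposition, and the finite-orbit permutation argument of Lemma \ref{Finitealeph0} does not survive infinitely many equimorphism classes) matches the obstruction the paper leaves open.
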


We give a sketch of proof assuming that Conjecture \ref{Sumaleph0} is true.
We know that $Sib(\mathcal{D}_1)=1$ or $\aleph_0$ by Lemmas \ref{FinitenontrivialD} and \ref{Countablebounded}. Also, $Sib(\mathcal{D}_2)=\aleph_0$ by Lemma \ref{Finitealeph0} and $Sib(\mathcal{D}_3)=1$ by Lemma \ref{FinitenontrivialD}. If Conjecture \ref{Sumaleph0} holds, then $Sib(\mathcal{D})\leq \aleph_0$. Since $\mathcal{D}_2$ contains some component with sibling number $\aleph_0$, $Sib(\mathcal{D})\geq \aleph_0$ by Lemma \ref{Infinitesiblingcomponent}. Thus, $Sib(\mathcal{D})=\aleph_0$.

\section{Siblings of Arbitrary DSCs}

We know by Theorem \ref{Chaindich} that an arbitrary chain has one or infinitely many siblings. 
In this section, our aim is to prove that a direct sum of chains of any cardinality has one or infinitely many siblings. This proves the alternate Thomass\'{e} conjecture for an arbitrary direct sum of chains.

\begin{lemma}  \label{Infsibfinitetrivial}
Let $\mathcal{D}$ be a DSC with finitely many trivial components.  $\mathcal{D} \oplus C^1 \hookrightarrow \mathcal{D}$ if and only if $\mathcal{D}$ contains an increasing sequence of non-trivial components. In particular, if $\mathcal{D} \oplus C^1 \hookrightarrow \mathcal{D}$, then $Sib(\mathcal{D})=\infty$. 
\end{lemma}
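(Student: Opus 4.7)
The plan is to establish the biconditional, then derive the sibling bound from Lemma~\ref{Increasingsequence}. Write $\mathcal{D} = \bigoplus_{i \in I} C_i \oplus A^n$ with the $C_i$ non-trivial and $n < \omega$ the (finite) number of singletons. For the forward direction, suppose $f : \mathcal{D} \oplus C^1 \hookrightarrow \mathcal{D}$. The source has the same non-trivial components $\{C_i\}_{i \in I}$ and exactly $n+1$ singletons. By Lemma~\ref{InjectiononI}, $f$ induces an injection $\hat{f}$ on components; since no non-trivial chain embeds into a singleton, $\hat{f}$ sends non-trivials to non-trivials, so $\hat{f}|_I : I \to I$ is injective. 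With $n+1$ singletons to place and only $n$ singleton components available in $\mathcal{D}$, injectivity forces at least one singleton to be mapped into a non-trivial component $C_{i_0}$, and then $i_0 \notin \hat{f}(I)$. Set $i_{k+1} := \hat{f}(i_k)$ for $k \geq 0$. A repetition $i_j = i_k$ with $0 \leq j < k$ would, by iterated injectivity, give $i_0 = i_{k-j} \in \hat{f}(I)$, contradicting $i_0 \notin \hat{f}(I)$. Hence the $i_k$ are pairwise distinct, and $f(C_{i_k}) \subseteq C_{i_{k+1}}$ witnesses the required increasing sequence $(C_{i_k})_{k < \omega}$ of non-trivial components.

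For the converse, given an increasing sequence $(C_n)_{n < \omega}$ of non-trivial components, write $\mathcal{D} = \bigoplus_{n < \omega} C_n \oplus \mathcal{F}$ where $\mathcal{F}$ collects the remaining components. Embed $\mathcal{D} \oplus C^1$ into $\mathcal{D}$ by sending the extra singleton to any point of $C_0$, embedding each original $C_n$ into $C_{n+1}$ using the assumed witness, and letting $\mathcal{F}$ act as the identity. Distinct source components land in distinct target components and order is preserved within each component, so this is a well-defined embedding.

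For the final clause, the forward direction already furnishes an infinite sequence of non-trivial components of $\mathcal{D}$ that is increasing under embeddability, and by hypothesis $\mathcal{D}$ has only finitely many (hence countably many) trivial components. Lemma~\ref{Increasingsequence} then yields $Sib(\mathcal{D}) = \infty$. The main obstacle is the forward direction: the pigeonhole on singletons to secure an index $i_0 \notin \hat{f}(I)$, and then the distinctness of the orbit $(i_k)_{k < \omega}$; the remaining steps reduce to routine bookkeeping via Lemma~\ref{InjectiononI}.
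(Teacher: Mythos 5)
Your proof is correct and follows essentially the same route as the paper: a pigeonhole argument forces some singleton into a non-trivial component, iterating $\hat{f}$ from there yields the increasing sequence, the converse is the same shift embedding, and the final clause is the same appeal to Lemma~\ref{Increasingsequence}. Your orbit argument showing the $i_k$ are pairwise distinct (via $i_0 \notin \hat{f}(I)$) is in fact more careful than the paper's ``iterating, we get an increasing sequence,'' which leaves that distinctness implicit.
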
 

\begin{proof}
Suppose that $\mathcal{D} \oplus C^1$ embeds in $\mathcal{D}$ by some embedding $f$. Since the number of trivial components of $\mathcal{D}$ is finite and since $\mathcal{D}\oplus C^1$ has one more trivial component than $\mathcal{D}$, there is a trivial component, say $C_0$, of $\mathcal{D}$ such that $f(C_0) \subset C_1$ for some non-trivial component $C_1$ of $\mathcal{D}$. By Lemma \ref{InjectiononI}, $f(C_1) \subseteq C_2$ for some non-trivial component $C_2$ other than $C_1$. Iterating, we get an increasing sequence $(C_n)_{n < \omega}$ of non-trivial components of $\mathcal{D}$. 

For the converse, let $(C_n)_{n < \omega}$ be an increasing sequence of non-trivial components of $\mathcal{D}$. Then, embed the singleton $C^1$ in $C_0$ and for every $n < \omega$, embed $C_n$ in $C_{n+1}$. It follows that $\mathcal{D} \oplus C^1 \hookrightarrow \mathcal{D}$.  

The last conclusion comes from the lemma \ref{Increasingsequence}. 
\end{proof}

\begin{corollary} \label{Noembeddingtrivial}
Let $\mathcal{D}= \mathcal{N} \oplus T$ be a DSC where $\mathcal{N}$, resp $T$, is the direct sum of the non-trivial, resp trivial, components of $\mathcal{D}$ such that $T\neq\emptyset$. If $\mathcal{D}$ has no increasing sequence of non-trivial components, then $\mathcal{D} \not\hookrightarrow \mathcal{N}$. 
\end{corollary}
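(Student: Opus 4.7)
The plan is to prove the contrapositive: assuming $\mathcal{D}\hookrightarrow \mathcal{N}$, I will produce an infinite increasing sequence of non-trivial components of $\mathcal{D}$. The argument mirrors the forward direction of Lemma \ref{Infsibfinitetrivial}, but uses only the weaker hypothesis $T\neq\emptyset$ in place of ``$T$ finite'', since now \emph{every} element of $\mathcal{D}$ is sent into $\mathcal{N}$, so even a single trivial component of $\mathcal{D}$ must be mapped into a non-trivial component of $\mathcal{N}$ and the iteration can begin.

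So suppose $f\colon \mathcal{D}\hookrightarrow \mathcal{N}$ is an embedding. Fix any singleton component $\{x_0\}$ of $T$, and let $\hat{f}$ be the injection on indices of components induced by $f$ as in Lemma \ref{InjectiononI}. Because $\mathcal{N}$ has no trivial components, $\hat{f}(\{x_0\})$ is a non-trivial component of $\mathcal{D}$, which I denote $C_1$. More generally, set $C_n:=\hat{f}^n(\{x_0\})$; this is well-defined and non-trivial by induction, since each $C_n$ is a non-trivial component of $\mathcal{N}\subseteq \mathcal{D}$ and thus lies in the domain of $\hat{f}$. By construction $f$ maps $C_n$ into $C_{n+1}$, so $C_n\hookrightarrow C_{n+1}$ for every $n\geq 1$.

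To finish, I need the $C_n$ to be pairwise distinct so that they constitute a genuine infinite sequence. If $C_m=C_n$ for some $m<n$, then iterating injectivity of $\hat{f}$ backwards $m$ times gives $\{x_0\}=\hat{f}^{n-m}(\{x_0\})$; but the right-hand side is a non-trivial component of $\mathcal{N}$ while $\{x_0\}$ is trivial, a contradiction. Hence $(C_n)_{n\geq 1}$ is an increasing sequence of non-trivial components of $\mathcal{D}$, contradicting the hypothesis. The only mildly delicate point is this distinctness check; the rest is a direct iteration of Lemma \ref{InjectiononI}.
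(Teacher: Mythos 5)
Your proof is correct and takes essentially the same route as the paper: the paper simply notes that $\mathcal{N}\oplus C^1\hookrightarrow \mathcal{D}\hookrightarrow\mathcal{N}$ and invokes the forward direction of Lemma \ref{Infsibfinitetrivial} applied to $\mathcal{N}$ (which has no trivial components), whereas you inline that same iteration of $\hat{f}$ starting from a singleton of $T$, adding an explicit check that the resulting components are pairwise distinct. No gaps.
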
 

\begin{proof}
If $\mathcal{N}=\emptyset$, then there is nothing to prove. Assume that $\mathcal{N} \neq \emptyset$. To seek a contradiction, suppose $\mathcal{N}\oplus T \hookrightarrow \mathcal{N}$. Let $x$ be an element of $T$. Then, $\mathcal{N} \oplus \{x\} \hookrightarrow \mathcal{N} \oplus T \hookrightarrow \mathcal{N}$, meaning that $\mathcal{N}$ contains an increasing sequence of non-trivial components. This contradiction completes the proof.  
\end{proof}

When $\mathcal{D}$ has countably many trivial components, Lemma \ref{Increasingsequence} asserts that the existence of only one increasing sequence of non-trivial components suffices to obtain infinitely many siblings for $\mathcal{D}$. The reason is that in this case there are at least countably many pairwise disjoint increasing sequences of non-trivial components which allows us to embed the trivial components into non-trivial ones. However, in the following lemma we say more.  

\begin{lemma}  \label{Pairwisedisincreasing}
Let $\mathcal{D}=\bigoplus_{i \in I}C_i \oplus T$ be a DSC where $T$ is the direct sum of the trivial components of $\mathcal{D}$ with  $|T|=\lambda>0$. There exist $\lambda$ pairwise disjoint increasing sequences of non-trivial components of $\mathcal{D}$ if and only if $\mathcal{D}\hookrightarrow \bigoplus_{i\in I} C_i$. In this case we have  $Sib(\mathcal{D})=\infty$.  
\end{lemma}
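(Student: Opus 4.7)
The plan is to prove the biconditional first and then deduce the sibling claim. For the forward direction, assume we have $\lambda$ pairwise disjoint increasing sequences $(C_{n,\alpha})_{n<\omega}$, $\alpha<\lambda$, of non-trivial components, and enumerate $T=\{t_\alpha : \alpha<\lambda\}$. Let $\mathcal{F}$ be the direct sum of the non-trivial components of $\mathcal{D}$ not appearing in any of these sequences. I build an embedding of $\mathcal{D}$ into $\bigoplus_{i\in I}C_i$ by the ``shift by one'' trick on each sequence: send $t_\alpha$ to some fixed element of $C_{0,\alpha}$, send $C_{n,\alpha}$ into $C_{n+1,\alpha}$ using the fact that the sequence is increasing, and send $\mathcal{F}$ to itself by the identity. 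Since all images lie in pairwise distinct components and each component of $\mathcal{D}$ is mapped into a single component of the target, this is a valid embedding.

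For the converse, assume $f:\mathcal{D}\hookrightarrow \bigoplus_{i\in I}C_i$. By Lemma~\ref{InjectiononI}, $f$ induces an injection $\hat{f}$ from the set of components of $\mathcal{D}$, which I identify with $I\sqcup T$, into $I$. Let $g=\hat{f}\restriction I:I\to I$ and $h=\hat{f}\restriction T:T\to I$; both are injective and their images $g(I)$ and $h(T)$ are disjoint. For each $x\in T$, define the orbit $i^x_0=h(x)$ and $i^x_{n+1}=g(i^x_n)$. Because $f$ is an embedding and $f(C_{i^x_n})\subseteq C_{g(i^x_n)}$, we have $C_{i^x_n}\hookrightarrow C_{i^x_{n+1}}$, so each orbit yields an increasing sequence of non-trivial components.

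The crux is checking that all terms are distinct, within a single orbit and across different orbits. For any $n<m$ and $x,x'\in T$, an equality $g^n(h(x))=g^m(h(x'))$ reduces by injectivity of $g^n$ to $h(x)=g^{m-n}(h(x'))$; if $m>n$ this places $h(x)\in g(I)\cap h(T)=\emptyset$, a contradiction, and if $m=n$ then injectivity of $h$ forces $x=x'$. Similarly, $i^x_0=h(x)$ cannot equal $g^m(h(x'))$ for $m\geq 1$. This yields $\lambda$ pairwise disjoint increasing sequences of non-trivial components, as required. The delicate bookkeeping with the two disjoint images $g(I)$ and $h(T)$ is the main technical point of the proof.

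Finally, for the sibling count: the biconditional gives $\mathcal{D}\approx \bigoplus_{i\in I}C_i$ (the reverse embedding being automatic), so for every $n<\omega$ the structure $\mathcal{D}_n=\bigoplus_{i\in I}C_i\oplus A^n$ is a sibling of $\mathcal{D}$, since $\mathcal{D}_n$ embeds in $\mathcal{D}$ (the $n$ singletons can be placed in $n$ distinct non-trivial components produced by any of the sequences) and $\mathcal{D}$ embeds in $\bigoplus_{i\in I}C_i\hookrightarrow \mathcal{D}_n$. As the $\mathcal{D}_n$ differ in the finite count of trivial components, they are pairwise non-isomorphic, so $Sib(\mathcal{D})\geq \aleph_0$. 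Alternatively, when $\lambda\leq\aleph_0$ one can invoke Lemma~\ref{Increasingsequence} directly from the existence of a single increasing sequence.
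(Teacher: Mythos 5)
Your proof is correct and takes essentially the same approach as the paper: the forward direction is the same shift-by-one embedding applied to each of the $\lambda$ disjoint sequences, and the converse builds the sequences by following the orbit of each trivial component under the map induced on components, exactly as the paper does. Your disjointness verification via the injectivity of $\hat{f}$ and the disjointness of $g(I)$ and $h(T)$ is, if anything, slightly more complete than the paper's incomparability argument, since it also explicitly checks that the terms within a single sequence are distinct.
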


\begin{proof}
($\Rightarrow$) For every $\xi < \lambda$, let $(C_{\xi,n})_{n < \omega}$ be an increasing sequence of non-trivial components of $\mathcal{D}$ such that for  $\xi \neq \xi' < \lambda$, $(C_{\xi,n})_{n < \omega}$ and $(C_{\xi',n})_{n < \omega}$ are disjoint.  We can write $\mathcal{D}= \bigoplus_{\xi < \lambda} \bigoplus_{n < \omega} C_{\xi , n} \oplus \mathcal{N} \oplus T$ where $\mathcal{N}$ is the direct sum of non-trivial components other than the $C_{\xi,n}$. For each $\xi < \lambda$, we have 
$$\bigoplus_{n < \omega} C_{\xi,n} \oplus 1 \hookrightarrow  \bigoplus_{n < \omega} C_{\xi,n}.$$  
Since the sequences are pairwise disjoint, it follows that 
$$\bigoplus_{\xi < \lambda}\bigoplus_{n < \omega} C_{\xi,n} \oplus \mathcal{N} \oplus T = \bigoplus_{\xi < \lambda} \left ( \bigoplus_{n < \omega} C_{\xi,n} \oplus 1  \right ) \oplus \mathcal{N} \hookrightarrow  \bigoplus_{\xi < \lambda} \bigoplus_{n < \omega} C_{\xi,n} \oplus \mathcal{N}  = \bigoplus_{i \in I} C_i$$ 
as desired. 

($\Leftarrow$) Suppose $\mathcal{D} \hookrightarrow \bigoplus_{i\in I} C_i$. Let $f:\bigoplus_{i\in I} C_i\oplus T \to \bigoplus_{i\in I} C_i$ be an embedding and $T=\{ c_\xi : \xi < \lambda\}$. Let $\xi < \lambda$ be given. Let $C_{\xi,0}$ be the component of $\bigoplus_{i\in I} C_i$ such that $f(c_\xi)\in C_{\xi,0}$. For every $n < \omega$, let $C_{\xi,n+1}$ be the component of $\bigoplus_{i\in I} C_i$ such that $f(C_{\xi,n})\subseteq C_{\xi,n+1}$. Then $(C_{\xi,n})_{n < \omega}$ is an increasing sequence of components of $\bigoplus_{i\in I} C_i$. Let $\xi \neq \xi'$ and  $(C_{\xi,n})_{n < \omega}$ and $(C_{\xi',n})_{n < \omega}$ be the increasing sequences of components of $\bigoplus_{i\in I} C_i$ corresponding to $c_\xi$ and $c_{\xi'}$, respectively. If the sequences have a common component $C$, then two incomparable elements $c_\xi$ and $c_{\xi'}$ embed into $C$, a contradiction. Thus, we get $\lambda$ pairwise disjoint increasing sequences of non-trivial components of $\mathcal{D}$. 

In particular, when $\mathcal{D}\hookrightarrow \bigoplus_{i\in I} C_i$, then the $\bigoplus_{i\in I} C_i\oplus A^n$ form infinitely many pairwise non-isomorphic siblings of $\mathcal{D}$. 
\end{proof}

We give some examples of DSCs with infinitely many siblings.  For each $n > 1$, let $\omega_n$ be the least ordinal of cardinality $\aleph_n$. 

\begin{example}

\begin{enumerate}
    \item Consider Example \ref{2.1}, $\mathcal{D}=\bigoplus_{\omega_1} \omega \oplus \bigoplus_\omega (\omega + 1) \oplus T$, where $T$ is the direct sum of the singletons of $\mathcal{D}$ with $|T|=\aleph_1$.  We have $\aleph_1.\aleph_0=\aleph_1$. This means that $\mathcal{D}$ has $\aleph_1$ pairwise disjoint increasing sequences of components. Thus, in this case $\mathcal{D}$ satisfies the conditions of Lemma \ref{Pairwisedisincreasing}. Consequently, $Sib(\mathcal{D})=\infty$. 
    \item Consider Example \ref{2.2}, $\mathcal{D}=\bigoplus_{n < \omega} \bigoplus_{\omega_n} C^n \oplus T$, where $T$ is the direct sum of the singletons of $\mathcal{D}$ with $|T|=\aleph_\omega$. Then, for every $m < \omega$, let $S_m$ be a family of $\aleph_m$ pairwise disjoint sequences of chains $C^m$ which are increasing. Note that no sequence in $S_m$ has a common element with a sequence in $S_n$ when $m\neq n$. Therefore, the sequences in $\bigcup_{m<\omega} S_m$ are pairwise disjoint.  Moreover, we have  $\aleph_\omega=\bigcup_{m < \omega} \aleph_m$. Thus, there exist $\aleph_\omega$ pairwise disjoint increasing sequences of non-trivial components of $\mathcal{D}$. Again, Lemma \ref{Pairwisedisincreasing}  implies that $Sib(\mathcal{D})=\infty$. 
\end{enumerate} 
\end{example}

Let $\mathcal{D}$ be a DSC. 
For every $n < \omega$, let $\lambda_n$ be the number of components of $\mathcal{D}$ which are chains of size $n$. We denote the direct sum $\bigoplus_{\lambda_n} C^n$ by $\mathcal{C}^n_{\lambda_n}$.

\begin{proposition} \label{Generalpairwisedisincreasing}
Let $\mathcal{D}=\bigoplus_{k \leq m} \mathcal{C}^k_{\lambda_k} \oplus \mathcal{Q}$ be a DSC where $\mathcal{Q}$ is a direct sum of chains $C$ with $m < |C|$ satisfying one of the following:

\begin{enumerate}   
    \item $m > 1$ and there exist some $1 \leq i < j \leq m$ such that $\lambda_i \leq \lambda_j$ and $\lambda_j$ is infinite; 
    
    \item  $\lambda_1 > \lambda_2 > \cdots > \lambda_m \geq \aleph_0$, for every $m < n$, $\lambda_n$ is finite and $\mathcal{Q}$  contains an increasing sequence of components;

\end{enumerate}
then, $Sib(\mathcal{D})=\infty$.  
\end{proposition}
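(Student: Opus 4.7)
The plan is to construct, in each of the two cases, an explicit countable family $\{\mathcal{D}_n\}_{n<\omega}$ of pairwise non-isomorphic siblings of $\mathcal{D}$ by altering a single \emph{finite} invariant while leaving the other cardinalities intact. In Case 1 the ``reservoir'' enabling such a modification is the infinite summand $\mathcal{C}^j_{\lambda_j}$ (exploiting $\lambda_j+\lambda_j=\lambda_j$ and $\lambda_i\le\lambda_j$); in Case 2 it is the increasing sequence inside $\mathcal{Q}$, to be used via the shift trick already employed in Lemma \ref{Infsibfinitetrivial}.

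For Case 1, fix $1\le i<j\le m$ with $\lambda_i\le\lambda_j$ and $\lambda_j$ infinite, and write $\mathcal{D}=\mathcal{C}^i_{\lambda_i}\oplus\mathcal{C}^j_{\lambda_j}\oplus\mathcal{R}$ where $\mathcal{R}$ collects the remaining components. For every $n<\omega$ set $\mathcal{D}_n:=\mathcal{C}^i_n\oplus\mathcal{C}^j_{\lambda_j}\oplus\mathcal{R}$. To embed $\mathcal{D}_n$ into $\mathcal{D}$, send the $n$ size-$i$ components of $\mathcal{D}_n$ into chains of $\mathcal{D}$ of size $\ge i$ (first exhausting the size-$i$ chains, then spilling into finitely many size-$j$ chains if $n>\lambda_i$, which forces $\lambda_i$ to be finite); route $\mathcal{C}^j_{\lambda_j}$ of $\mathcal{D}_n$ into the remaining $\lambda_j$ size-$j$ chains of $\mathcal{D}$, and embed $\mathcal{R}$ identically. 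For the reverse embedding $\mathcal{D}\hookrightarrow\mathcal{D}_n$, partition the $\lambda_j$ size-$j$ chains of $\mathcal{D}_n$ as $A\sqcup B$ with $|A|=|B|=\lambda_j$, embed $\mathcal{C}^j_{\lambda_j}$ of $\mathcal{D}$ into $A$, and route the $\lambda_i$ size-$i$ chains of $\mathcal{D}$ into the $n$ size-$i$ components of $\mathcal{D}_n$ together with $\lambda_i$ chains chosen from $B$ (possible since $\lambda_i\le\lambda_j=|B|$). Each $\mathcal{D}_n$ has exactly $n$ components of size $i$, a finite value strictly depending on $n$, so the family is pairwise non-isomorphic and $Sib(\mathcal{D})=\infty$.

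For Case 2, let $(Q_n)_{n<\omega}$ be the given increasing sequence in $\mathcal{Q}$; since $|Q_n|\ge|Q_0|>m$ we have $C^{m+1}\hookrightarrow Q_n$ for every $n$. For each $k<\omega$ define $\mathcal{D}_k:=\mathcal{D}\oplus\bigoplus_k C^{m+1}$. Plainly $\mathcal{D}\hookrightarrow\mathcal{D}_k$ by inclusion. For $\mathcal{D}_k\hookrightarrow\mathcal{D}$, shift along the sequence by mapping $Q_n\mapsto Q_{n+k}$ (valid because $Q_n\hookrightarrow Q_{n+k}$), act as the identity on all other components of $\mathcal{D}$, and send the $k$ fresh copies of $C^{m+1}$ into the now-free components $Q_0,\dots,Q_{k-1}$. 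As $\lambda_{m+1}$ is finite by hypothesis, $\mathcal{D}_k$ has exactly $\lambda_{m+1}+k$ components of size $m+1$, a finite count strictly increasing in $k$; hence the $\mathcal{D}_k$ are pairwise non-isomorphic and $Sib(\mathcal{D})=\infty$. The main subtlety to watch is Case 1 when $\lambda_i$ itself is infinite: there the finite count $n$ must truly distinguish the $\mathcal{D}_n$'s (it does, as $n<\omega\le\lambda_i$), and the rerouting in the reverse embedding relies crucially on the cardinal identity $\lambda_i+\lambda_j=\lambda_j$ to succeed.
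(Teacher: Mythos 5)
Your proposal is correct and follows essentially the same route as the paper: in both cases one perturbs the finite count of components of a single size ($i$ in Case 1, $m+1$ in Case 2) and absorbs the perturbation into the infinite summand $\mathcal{C}^j_{\lambda_j}$, respectively into the increasing sequence inside $\mathcal{Q}$, so that the resulting structures are equimorphic to $\mathcal{D}$ but distinguished by that finite invariant. The only cosmetic difference is that the paper first deletes \emph{all} size-$i$ (resp.\ size-$(m+1)$) components before reinserting exactly $n$ of them, whereas you modify or augment the existing count directly; the two are interchangeable.
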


\begin{proof}
(1) Notice that $\mathcal{C}^i_{\lambda_i} \hookrightarrow \mathcal{C}^j_{\lambda_j}$ because $\mathcal{C}^j_{\lambda_j}$ contains $\lambda_i$ pairwise disjoint increasing sequences of components. It yields that 
$$\mathcal{D}_i=\bigoplus_{\substack {k=1 \\ k \neq i}}^m \mathcal{C}^k_{\lambda_k} \approx \bigoplus_{k=1}^m\mathcal{C}^k_{\lambda_k}.$$
This implies that $\mathcal{D} \approx \mathcal{D}_i \oplus \mathcal{Q}$. For every $n < \omega$, set $\mathcal{C}_n := \bigoplus_n C^i \oplus \mathcal{D}_i$. We have $\mathcal{C}_n \approx \mathcal{D}_i$ and consequently, $\mathcal{C}_n \oplus \mathcal{Q} \approx \mathcal{D}$. Since $\mathcal{C}_n$ contains exactly $n$ components of size $i$ and $\mathcal{Q}$ has no component of size $i$, the $\mathcal{C}_n\oplus \mathcal{Q}$ are pairwise non-isomorphic. Hence, $Sib(\mathcal{D})=\infty$. 

(2) We have $\mathcal{C}^{m+1}_{\lambda_{m+1}} \hookrightarrow \mathcal{Q}$ because $\lambda_{m+1}$ is finite and $\mathcal{Q}$ has an increasing sequence of components. Let $\mathcal{D}_{m+1}$ be the substructure of $\mathcal{D}$ with no component of size $m+1$. It follows that $\mathcal{D}_{m+1} \approx \mathcal{D}$. Set $\mathcal{C}_n := \bigoplus_n C^{m+1} \oplus \mathcal{D}_{m+1}$ for every $n < \omega$. We have $\mathcal{C}_n \approx \mathcal{D}_{m+1} \approx \mathcal{D}$. Since $\mathcal{C}_n$ contains exactly $n$ components of size $m+1$, the $\mathcal{C}_n$ are pairwise non-isomorphic. Therefore, $Sib(\mathcal{D})=\infty$. 
\end{proof}

\begin{example}
\begin{enumerate}
    \item Consider Example \ref{2.3}, $\mathcal{D}=\bigoplus_{\omega_1} C^1 \oplus \bigoplus_\omega C^3$. We have $\lambda_1 = \aleph_1$, $\lambda_2=0$ and $\lambda_3=\aleph_0$. Notice that $\lambda_2 \leq \lambda_3$ and $\lambda_3$ is infinite. Therefore, $Sib(\mathcal{D})=\infty$. 
    
    \item Consider Example \ref{2.4}, $\mathcal{D}=\bigoplus_{\omega_3} C^1 \oplus \bigoplus_{\omega_2} C^2 \oplus \bigoplus_{\omega_1} C^3 \oplus \bigoplus_{\omega_1} \omega$. We have $\lambda_1 = \aleph_3$, $\lambda_2 = \aleph_2$ and $\lambda_3 = \aleph_1$. Set $\mathcal{Q}=\bigoplus_{\omega_1} \omega$. We have $3 < |\omega|$. Since $\mathcal{Q}$ contains $\lambda_3$ pairwise disjoint increasing sequences of components, it follows that $Sib(\mathcal{D})=\infty$. 
\end{enumerate}
\end{example}

In the following lemma when $\mathcal{D}$ has no increasing sequence of non-trivial components, then it has one sibling, on condition that all its components have just one sibling.  For simplicity of its proof, we need the following setting. 
 
Let $\mathcal{D}=\bigoplus_{i \in I}C_i$ be a DSC. For $i, j \in I$ define $i \sim j$ if $C_i \approx C_j$. Then $\sim$ is an equivalence relation on $I$.  So, we can  write $I$ as a sequence of its equivalence classes that is $I=\bigcup_{\xi < \gamma} I_\xi$ where $\gamma$ is the number of equivalence classes of $I$ and for every $\xi < \gamma$, $I_\xi = [i_\xi]_\sim$ for some $i_\xi \in I$. For two disjoint equivalence classes $[i]_\sim$ and $[j]_\sim$ , $i, j \in I$, define $[i]_\sim < [j]_\sim$ if $C_i \hookrightarrow C_j$ and $C_j \not\hookrightarrow C_i$. It is clear that if $[i]_\sim < [j]_\sim$, then every chain $C_{i'}$ where $i' \in [i]_\sim$, embeds in any chain $C_{j'}$, $j' \in [j]_\sim$. Therefore, when there exists a strictly increasing sequence of components of $\mathcal{D}$ w.r.t embeddability, then there is an increasing sequence of equivalence classes of $I$ w.r.t $<$ and vice versa.

\begin{lemma} \label{Noincreasing}
Let $\mathcal{D}$ be a DSC such that there is no increasing sequence of its non-trivial components w.r.t embeddability and  every component has just one sibling. 
Then $Sib(\mathcal{D})=1$. 
\end{lemma}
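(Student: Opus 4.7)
The plan is to take an arbitrary sibling $\mathcal{D}'$ of $\mathcal{D}$ and prove $\mathcal{D}\cong \mathcal{D}'$. Since $\mathcal{D}'\hookrightarrow \mathcal{D}$, I may replace $\mathcal{D}'$ by an isomorphic copy and assume $\mathcal{D}'\subseteq \mathcal{D}$. Composing an embedding $\mathcal{D}\hookrightarrow \mathcal{D}'$ with the inclusion $\mathcal{D}'\subseteq \mathcal{D}$ yields a self-embedding $f:\mathcal{D}\to \mathcal{D}$ with $f^n(\mathcal{D})\subseteq \mathcal{D}'$ for every $n\geq 1$, and Lemma \ref{InjectiononI} provides an injection $\hat f:I\to I$ with $f(C_i)\subseteq C_{\hat f(i)}$ for every $i\in I$. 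The whole proof is a structural analysis of $\hat f$ and of the traces $\mathcal{D}'\cap C_i$, in the same spirit as the proof of Lemma \ref{FinitenontrivialD} but now without any finiteness assumption on the index set.

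First I will show that $\hat f$ preserves the partition $I = I_{\mathrm{nt}}\sqcup I_{\mathrm{tr}}$ into non-trivial and trivial indices and acts as a permutation with finite cycles on $I_{\mathrm{nt}}$. Because a non-trivial chain cannot embed into a singleton, $\hat f(I_{\mathrm{nt}})\subseteq I_{\mathrm{nt}}$. If some $i\in I_{\mathrm{nt}}$ had an infinite $\hat f$-orbit, then the iterates $(C_{\hat f^n(i)})_{n<\omega}$ would be pairwise distinct non-trivial components with $C_{\hat f^n(i)}\hookrightarrow C_{\hat f^{n+1}(i)}$ witnessed by $f$, contradicting the no-increasing-sequence hypothesis. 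Hence every non-trivial index is periodic, $\hat f|_{I_{\mathrm{nt}}}$ is a bijection, and injectivity of $\hat f$ then forces $\hat f(I_{\mathrm{tr}})\subseteq I_{\mathrm{tr}}$.

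Exploiting periodicity, for each non-trivial $C_i$ of period $k_i$ the iterate $f^{k_i}$ embeds $C_i$ into itself with image in $\mathcal{D}'\cap C_i$, so $\mathcal{D}'\cap C_i\approx C_i$; the single-sibling hypothesis then upgrades this to $\mathcal{D}'\cap C_i\cong C_i$. Because sub-chains of $C_i$ are connected, $\mathcal{D}'\cap C_i$ is a single non-trivial component of $\mathcal{D}'$, and conversely every non-trivial component of $\mathcal{D}'$ must lie inside some non-trivial $C_i$ and fill its trace. This yields an isomorphism-preserving bijection between the non-trivial components of $\mathcal{D}$ and those of $\mathcal{D}'$.

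The step I expect to be the main obstacle is matching the cardinalities of the trivial parts, because Lemma \ref{InjectiononI} only delivers $|I|=|J|$ and cardinal arithmetic does not in general allow cancellation of the possibly infinite $|I_{\mathrm{nt}}|$. I will settle it with two one-sided inequalities. On the one hand, any trivial component of $\mathcal{D}'$ is a singleton that must sit inside a component of $\mathcal{D}$, and the previous paragraph shows that the trace inside a non-trivial $C_i$ is already the single non-trivial component $\mathcal{D}'\cap C_i$; thus every trivial component of $\mathcal{D}'$ coincides with a trivial component of $\mathcal{D}$, giving $|J_{\mathrm{tr}}|\leq |I_{\mathrm{tr}}|$. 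On the other hand, for each $i\in I_{\mathrm{tr}}$ the image $f(C_i)=C_{\hat f(i)}$ is a singleton lying in $\mathcal{D}'$, hence a trivial component of $\mathcal{D}'$, and the injectivity of $\hat f|_{I_{\mathrm{tr}}}$ produces $|I_{\mathrm{tr}}|$ distinct such singletons, so $|J_{\mathrm{tr}}|\geq |I_{\mathrm{tr}}|$. Combined with the bijection on non-trivial components (and the fact that all singletons are pairwise isomorphic), this shows that $\mathcal{D}$ and $\mathcal{D}'$ have the same multiset of component isomorphism types, so $\mathcal{D}\cong \mathcal{D}'$ and $Sib(\mathcal{D})=1$.
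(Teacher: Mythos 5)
Your proof is correct and takes essentially the same route as the paper's: both use the induced map $\hat f$ on indices, derive from the no-increasing-sequence hypothesis that every non-trivial index is $\hat f$-periodic so that $f^{k_i}(C_i)\subseteq \mathcal{D}'\cap C_i\subseteq C_i$ forces $\mathcal{D}'\cap C_i\cong C_i$, and then show the trivial parts of $\mathcal{D}$ and $\mathcal{D}'$ have equal cardinality via $f(T)\subseteq T'\subseteq T$. Your treatment of the two side issues (why $\hat f$ sends trivial indices to trivial indices, and the two-inequality squeeze on $|T'|$) is a slightly more careful write-up of the same argument, not a different method.
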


\begin{proof} 
Let $\mathcal{D}=\bigoplus_{i \in I} C_i \oplus T$, where $T$ is the direct sum of the trivial components of $\mathcal{D}$, satisfy the conditions of the lemma. Let $f$ be an embedding of $\mathcal{D}$ and $\hat{f} : I \to I$ be defined as in Lemma \ref{InjectiononI}. Let $i \in I$ be given. Since there is no increasing sequence of non-trivial components of $\mathcal{D}$, there exist some $m , n$ such that $m < n$ and $\hat{f}^m(i)=\hat{f}^n(i)$. Assume that $n$ is the least integer with above property. We prove that $m=0$. Suppose $0 < m$. By setting $\hat{f}^m(i)=\hat{f}^n(i)=j$, we have $f(f^{m-1}(C_i))\subseteq C_j$ and $f(f^{n-1}(C_i))\subseteq C_j$. Notice that $0 \leq m-1 < n-1$. So $f^{m-1}(C_i)$ and $f^{n-1}(C_i)$ are two distinct components of $\mathcal{D}$. But $f$ embeds these two components in $C_j$ which is not possible because $\hat{f}$ is injective. For $i, j \in I$ define $i \equiv_f j$ if there exists some $m$ such that $\hat{f}^m(i)=j$. The relation $\equiv_f$ is an equivalence relation and for every $i \in I$ there is some $m(i)$ such that $[i]_\sim = [i_0]_{\equiv_f} \cup \cdots \cup [i_{m(i)}]_{\equiv_f}$ where each $[i_k]_{\equiv_f}$, $0 \leq k \leq m(i)$, is finite. Further, $i \equiv_f j$ implies that $[i]_\sim = [j]_\sim$ and since every component has only one sibling, we have $C_i \cong C_j$.   

Let $\mathcal{D}'\subseteq \mathcal{D}$ be a sibling of $\mathcal{D}$. Then, there is an embedding $f$ of $\mathcal{D}$ such that $f(\mathcal{D})  \hookrightarrow \mathcal{D}' \hookrightarrow \mathcal{D}$. Let $i \in I$ be given. We have $C_i \cap \mathcal{D}' \cong C_j \cap \mathcal{D}' \cong C_i$ where $j \in [i]_{\equiv_f}$. Let $|[i]_{\equiv_f}|=m_i$. Then, $\mathcal{D}'$ contains $m_i$ copies of $C_i$ as does $\mathcal{D}$ and they are pairwise isomorphic.  Further, if for some $x \in T$, $f(x)$ is contained in a non-trivial component $C$, then either there is an increasing sequence of non-trivial components, a contradiction, or $C$ embeds in $x$ which is impossible. So, we have $f(T) \subseteq T$ and $|f(T)|=|T|$. Therefore, $\mathcal{D}'$ is of the form $\bigoplus_{j \in J} C_j \oplus T'$ where $f(T) \subseteq T' \subseteq T$. It follows that $\bigoplus_{j \in J} C_j \cong \bigoplus_{i \in I} C_i$ and $|T'|=|T|$ meaning that $\mathcal{D}' \cong \mathcal{D}$. Thus,  $Sib(\mathcal{D})=1$.   
\end{proof}

In Lemma \ref{Noincreasing}, the lack of increasing sequence of non-trivial components prevents embedding trivial components into non-trivial ones by Corollary \ref{Noembeddingtrivial} because it  does not allow embedding a singleton into a non-trivial component. This means that any sibling $\mathcal{D}'$ of $\mathcal{D}$ with the conditions in the Lemma \ref{Noincreasing} has a set of trivial components $T'$ with $|T'|=|T|$ where $T$ is the direct sum of the trivial components of $\mathcal{D}$. The same happens if the cardinal of $T$ is uncountable but $\mathcal{D}$ has countably many pairwise disjoint increasing sequences of non-trivial components. However, in the latter case, if the number of components of $\mathcal{D}$ which are chains of size 2 is countable, we can apply Proposition \ref{Generalpairwisedisincreasing}, part (2), to obtain infinitely many pairwise non-isomorphic siblings. Remember that for every $n < \omega$, $\lambda_n$ is the number of components of $\mathcal{D}$ which are chains of size $n$. Bearing this in mind, we may assume that there exists some $m < \omega$ for which $\lambda_1 > \lambda_2 > \cdots > \lambda_m \geq \aleph_0$ and for every $m < n$, $\lambda_n$ is finite. Moreover, assume that there is no increasing sequence of components $\mathcal{D}$ with size greater than $m$. We have the following lemma which is a generalisation of Lemma \ref{Noincreasing} as well as Lemma  \ref{FinitenontrivialD}.     

\begin{lemma} \label{Generalnoincreasing}
Let $\mathcal{D}=\bigoplus_{k \leq m} \mathcal{C}^k_{\lambda_k} \oplus \mathcal{Q}$ where $\lambda_1 > \lambda_2 > \cdots > \lambda_m \geq \aleph_0$, for every $m < n$, $\lambda_n$ is finite and $\mathcal{Q}$ is a direct sum of chains $C$ with $m < |C|$ such that it does not contain increasing sequence of components. If each component of $\mathcal{D}$ has one sibling, 
then $Sib(\mathcal{D})=1$. 
\end{lemma}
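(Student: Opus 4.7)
The plan is to show that every sibling $\mathcal{D}' \subseteq \mathcal{D}$ with $\mathcal{D}' \approx \mathcal{D}$ is in fact isomorphic to $\mathcal{D}$. I would fix an embedding $f:\mathcal{D}\hookrightarrow\mathcal{D}$ with $f(\mathcal{D})\subseteq \mathcal{D}'$, write $\mathcal{D}_{\leq m}:=\bigoplus_{k\leq m}\mathcal{C}^k_{\lambda_k}$ so that $\mathcal{D}=\mathcal{D}_{\leq m}\oplus\mathcal{Q}$, and put $\mathcal{D}'_{\leq m}:=\mathcal{D}'\cap \mathcal{D}_{\leq m}$ and $\mathcal{D}'_\mathcal{Q}:=\mathcal{D}'\cap \mathcal{Q}$ (as induced substructures), so that $\mathcal{D}'=\mathcal{D}'_{\leq m}\oplus \mathcal{D}'_\mathcal{Q}$. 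Every component $C$ of $\mathcal{Q}$ has $|C|>m$, so $f(C)$ lies in a single component of $\mathcal{D}$ of size at least $|C|>m$, i.e.\ in a component of $\mathcal{Q}$; hence $f$ restricts to an embedding $f|_\mathcal{Q}:\mathcal{Q}\hookrightarrow \mathcal{Q}$ with image contained in $\mathcal{D}'_\mathcal{Q}$. Combining with the reverse inclusion $\mathcal{D}'_\mathcal{Q}\hookrightarrow \mathcal{Q}$ gives $\mathcal{D}'_\mathcal{Q}\approx \mathcal{Q}$, and since $\mathcal{Q}$ satisfies the hypotheses of Lemma~\ref{Noincreasing}, I would conclude $\mathcal{D}'_\mathcal{Q}\cong \mathcal{Q}$.

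The heart of the argument is to show that the induced injection $\hat{f}:I\to I$ (from Lemma~\ref{InjectiononI}) maps $I_{\leq m}$ into itself, where $I=I_{\leq m}\sqcup I_\mathcal{Q}$ partitions the components according to whether they live in $\mathcal{D}_{\leq m}$ or in $\mathcal{Q}$. Sizes are non-decreasing under $\hat{f}$, so $\hat{f}(I_\mathcal{Q})\subseteq I_\mathcal{Q}$ automatically. I would now replay the orbit analysis from the proof of Lemma~\ref{Noincreasing} inside $I_\mathcal{Q}$: for any $i\in I_\mathcal{Q}$, the forward orbit $(\hat{f}^n(i))_{n<\omega}$ cannot consist of infinitely many distinct components (they would form an increasing sequence of components of $\mathcal{Q}$), so it must repeat; injectivity of $\hat{f}$ forces periodicity from the start, giving $i\in \hat{f}(I_\mathcal{Q})$. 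Thus $\hat{f}|_{I_\mathcal{Q}}$ is a bijection of $I_\mathcal{Q}$ onto itself. If some $i\in I_{\leq m}$ satisfied $\hat{f}(i)\in I_\mathcal{Q}$, surjectivity would produce some $i'\in I_\mathcal{Q}$ with $\hat{f}(i')=\hat{f}(i)$, contradicting injectivity. Hence $f(\mathcal{D}_{\leq m})\subseteq \mathcal{D}_{\leq m}\cap \mathcal{D}'=\mathcal{D}'_{\leq m}$, which combined with the inclusion $\mathcal{D}'_{\leq m}\hookrightarrow \mathcal{D}_{\leq m}$ yields $\mathcal{D}'_{\leq m}\approx \mathcal{D}_{\leq m}$.

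To conclude, I would verify that $\mathcal{D}_{\leq m}$ has exactly one sibling by a cardinality argument exploiting $\lambda_1>\lambda_2>\cdots >\lambda_m\geq \aleph_0$. For any sibling $\mathcal{D}''\subseteq \mathcal{D}_{\leq m}$, writing $\lambda''_j$ for the number of size-$j$ components of $\mathcal{D}''$, the inclusion $\mathcal{D}''\subseteq \mathcal{D}_{\leq m}$ gives $\sum_{j=k}^m \lambda''_j\leq \sum_{j=k}^m \lambda_j=\lambda_k$, while the embedding $\mathcal{D}_{\leq m}\hookrightarrow \mathcal{D}''$ gives $\sum_{j=k}^m \lambda''_j\geq \lambda_k$, so that $\sum_{j=k}^m \lambda''_j=\lambda_k$ for every $k\leq m$. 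Solving the resulting cardinal equation $\lambda''_k+\lambda_{k+1}=\lambda_k$ under $\lambda_k>\lambda_{k+1}\geq \aleph_0$ forces $\lambda''_k=\lambda_k$ pointwise, and hence $\mathcal{D}''\cong \mathcal{D}_{\leq m}$. Applied to $\mathcal{D}'_{\leq m}$ this gives $\mathcal{D}'_{\leq m}\cong \mathcal{D}_{\leq m}$, and combined with $\mathcal{D}'_\mathcal{Q}\cong \mathcal{Q}$ we obtain $\mathcal{D}'\cong \mathcal{D}$, so $Sib(\mathcal{D})=1$. The main obstacle I expect is precisely the no-leakage step $\hat{f}(I_{\leq m})\subseteq I_{\leq m}$: when the number of $\mathcal{Q}$-components dominates the $\lambda_k$, cardinality bookkeeping on $\mathcal{D}'_{\leq m}$ alone does not prevent small components from being absorbed into $\mathcal{Q}$-components, and it is the bijectivity of $\hat{f}|_{I_\mathcal{Q}}$ (furnished by the no-increasing-sequence hypothesis on $\mathcal{Q}$) that closes this gap.
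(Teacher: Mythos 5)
Your proof is correct and follows essentially the same route as the paper's: decompose $\mathcal{D}$ into the bounded part $\bigoplus_{k\leq m}\mathcal{C}^k_{\lambda_k}$ and $\mathcal{Q}$, apply Lemma~\ref{Noincreasing} to $\mathcal{Q}$, rule out leakage of the small components into $\mathcal{Q}$ by an orbit argument resting on the no-increasing-sequence hypothesis, and handle the bounded part using $\lambda_1>\cdots>\lambda_m\geq\aleph_0$. Your two delicate steps --- bijectivity of $\hat{f}$ on the $\mathcal{Q}$-indices and the explicit cardinal bookkeeping showing $Sib\bigl(\bigoplus_{k\leq m}\mathcal{C}^k_{\lambda_k}\bigr)=1$ --- are in fact spelled out more fully than in the paper, which asserts the corresponding facts rather tersely.
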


\begin{proof}
Let $\mathcal{Q}=\bigoplus_{i \in I} C_i$ where for every $i \in I$, $m < |C_i|$ and $Sib(C_i)=1$. By Lemma \ref{Noincreasing},  $Sib(\mathcal{Q})=1$. Let $\mathcal{D}'\subseteq \mathcal{D}$ be sibling of $\mathcal{D}$. Then, there is an embedding $f$ of $\mathcal{D}$ such that $f(\mathcal{D}) \hookrightarrow \mathcal{D}' \hookrightarrow \mathcal{D}$. Now for no $n < \omega$ and $1 \leq k \leq m$, $f^n(C^k)$ is a subchain of a component $C$ of $\mathcal{Q}$, because otherwise either there is an increasing sequence of components of $\mathcal{Q}$ or $C$ embeds in $C^k$ which is impossible because $|C| > |C^k|$. Moreover, for no distinct $1 \leq i , j \leq m$, $\mathcal{C}^i_{\lambda_i} \hookrightarrow \mathcal{C}^j_{\lambda_j}$ because if $i < j$, then $\lambda_i > \lambda_j$ and if $i > j$, then chains of size $i$ cannot embed into chains of size $j$. It follows that $\mathcal{D}'$ is of the form $\mathcal{D}'=\mathcal{Q}' \oplus \bigoplus_{k \leq m} \mathcal{C}^k_{\lambda_k}$ where $\mathcal{Q}' \approx \mathcal{Q}$. We know that $\mathcal{Q}\cong \mathcal{Q}'$. Therefore, $\mathcal{D} \cong \mathcal{D}'$ establishing that $Sib(\mathcal{D})=1$.
\end{proof}

A theorem due to Sierpi\'{n}ski \cite{ROS} (see also \cite{DM}) states that there are $2^{2^{\aleph_0}}$ dense subchains $C$ of $\mathbb{R}$ of cardinality $2^{\aleph_0}$ which are pairwise non-equimorphic and they are also \textit{embedding rigid} meaning that the identity map is the only embedding of $C$ into itself. This means that we can easily construct a direct sum of chains satisfying Lemma \ref{Generalnoincreasing} with infinitely many pairwise non-equimorphic components each of which has only one sibling. 

Now we are ready to conclude the alternate Thomass\'{e} conjecture for a direct sum of chains. 

\begin{theorem} \label{AltThomasseP}
If $\mathcal{D}$ is a direct sum of chains, then $Sib(\mathcal{D})=1$ or  $\infty$. 
\end{theorem}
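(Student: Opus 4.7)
The plan is a case analysis combining the results of this section. By Theorem \ref{Chaindich} every chain has one or infinitely many siblings, so if some component of $\mathcal{D}$ has infinitely many siblings then Lemma \ref{Infinitesiblingcomponent} yields $Sib(\mathcal{D})=\infty$. I therefore assume every component of $\mathcal{D}$ has exactly one sibling, and write $\mathcal{D}=\bigoplus_{i\in I}C_i\oplus T$ with the $C_i$ non-trivial and $T$ the antichain of trivial components. If $\mathcal{D}$ contains no increasing sequence of non-trivial components with respect to embeddability, Lemma \ref{Noincreasing} gives $Sib(\mathcal{D})=1$ directly. Otherwise such a sequence exists; if $|T|<\aleph_0$ apply Lemma \ref{Infsibfinitetrivial}, and if $|T|=\aleph_0$ apply Lemma \ref{Increasingsequence}, each giving $Sib(\mathcal{D})=\infty$.

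The substantive case is $|T|>\aleph_0$. Using the $\lambda_k$-notation introduced before Proposition \ref{Generalpairwisedisincreasing}, note that $\lambda_1=|T|$ is uncountable, so the largest integer $m$ with $\lambda_m\geq\aleph_0$ is well defined and satisfies $m\geq 1$. If the infinite multiplicities $\lambda_1,\ldots,\lambda_m$ are not strictly decreasing, i.e.\ $\lambda_i\leq\lambda_j$ for some $i<j\leq m$, then Proposition \ref{Generalpairwisedisincreasing}(1) gives $Sib(\mathcal{D})=\infty$. Otherwise $\lambda_1>\lambda_2>\cdots>\lambda_m\geq\aleph_0$ with $\lambda_n$ finite for every $n>m$; let $\mathcal{Q}$ be the direct sum of the components of $\mathcal{D}$ of size strictly greater than $m$. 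If $\mathcal{Q}$ contains an increasing sequence of components, Proposition \ref{Generalpairwisedisincreasing}(2) gives $Sib(\mathcal{D})=\infty$; otherwise all hypotheses of Lemma \ref{Generalnoincreasing} are met, so $Sib(\mathcal{D})=1$. The increasing sequence of non-trivial components we assumed then necessarily lives inside $\mathcal{C}^m_{\lambda_m}$ with $m\geq 2$, since the alternative $m=1$ combined with a rigid $\mathcal{Q}$ would preclude the existence of any increasing sequence of non-trivial components in $\mathcal{D}$, contradicting our standing assumption.

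The principal obstacle is organizing the uncountable-$T$ case into a tight trichotomy that fits exactly the hypotheses of Proposition \ref{Generalpairwisedisincreasing}(1), Proposition \ref{Generalpairwisedisincreasing}(2), and Lemma \ref{Generalnoincreasing} without gaps; in particular, one must verify that in the final subcase the assumption of an increasing sequence of non-trivial components in $\mathcal{D}$ forces $m\geq 2$, so that this sequence lives inside a single fixed-size class $\mathcal{C}^m_{\lambda_m}$, and that no further pathological configurations (such as mixed sequences crossing from finite chains into $\mathcal{Q}$) slip through the partition.
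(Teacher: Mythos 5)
Your proposal is correct and rests on the same core machinery as the paper's proof --- Lemma \ref{Infinitesiblingcomponent} to reduce to the case where every component has one sibling, and Proposition \ref{Generalpairwisedisincreasing} together with Lemma \ref{Generalnoincreasing} for the decisive case --- but your case decomposition is genuinely different at the top level. The paper splits on whether $\mathcal{D}$ has finitely or infinitely many non-trivial components, handling the first alternative by Lemma \ref{FinitenontrivialD} and sending the second directly into the $\lambda_k$-decomposition; you instead split on the existence of an increasing sequence of non-trivial components and on $|T|$, which lets you settle the countable-$T$ branches immediately by Lemmas \ref{Noincreasing}, \ref{Infsibfinitetrivial} and \ref{Increasingsequence} (the latter alone would cover all of $|T|\leq\aleph_0$) and reserve the $\lambda_k$-analysis for uncountable $T$ only. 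Your partition is arguably the more explicitly exhaustive of the two: for instance, a direct sum of infinitely many non-trivial components all of which are infinite chains, with only finitely many singletons, is caught by your Lemma \ref{Infsibfinitetrivial} branch, whereas it does not literally satisfy the hypotheses of either Subcase 2.2.1 or 2.2.2 in the paper's proof (there every displayed condition presupposes some $\lambda_m\geq\aleph_0$).

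One point needs tightening. When you assert that ``the largest integer $m$ with $\lambda_m\geq\aleph_0$ is well defined'' because $\lambda_1=|T|$ is uncountable, this does not yet follow: a priori infinitely many of the $\lambda_n$ could be infinite, in which case no largest such $m$ exists. The fix is to run your own dichotomy in the opposite order: if $\lambda_i\leq\lambda_j$ for some $i<j$ with $\lambda_j$ infinite, Proposition \ref{Generalpairwisedisincreasing}(1) already gives $Sib(\mathcal{D})=\infty$; otherwise the set $\{n:\lambda_n\geq\aleph_0\}$ is an initial segment on which the $\lambda_n$ are strictly decreasing, and since the cardinals are well ordered this segment is finite, so $m$ exists. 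With that reordering your trichotomy for uncountable $T$ has no gaps. Your closing consistency check is sound, except that the assumed increasing sequence need only eventually stabilise inside some class $\mathcal{C}^k_{\lambda_k}$ with $2\leq k\leq m$, not necessarily $k=m$; this does not affect the argument, since Lemma \ref{Generalnoincreasing} is applied on its own hypotheses and does not require the absence of such a sequence among the bounded components.
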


\begin{proof}
\textbf{Case 1} $\mathcal{D}$ contains a component with infinitely many siblings. Then $Sib(\mathcal{D})=\infty$ by Lemma \ref{Infinitesiblingcomponent}.  

\noindent \textbf{Case 2:} Every component of $\mathcal{D}$ has just one sibling. 

\textbf{Subcase 2.1} $\mathcal{D}$ has finitely many non-trivial components. Then $Sib(\mathcal{D})=1$ by Lemma \ref{FinitenontrivialD}.

\textbf{Subcase 2.2}   $\mathcal{D}$ has infinitely many non-trivial components. Let $\mathcal{D}=\bigoplus_{k \leq m}\mathcal{C}^n_{\lambda_n} \oplus \mathcal{Q}$ where  $\mathcal{Q}$ is a direct sum of chains $C$ with $m < |C|$.

\textbf{Subcase 2.2.1}   If $m >1$ and for some $1 \leq i < j \leq m$, $\lambda_i \leq \lambda_j$ such that $\lambda_j$ is infinite; or if $\lambda_1 > \lambda_2 > \cdots > \lambda_m \geq \aleph_0$, for every $m < n$, $\lambda_n$ is finite and $\mathcal{Q}$ contains an increasing sequence of components, then $Sib(\mathcal{D})=\infty$ by Proposition \ref{Generalpairwisedisincreasing}. 

\textbf{Subcase 2.2.2} If  $\lambda_1 > \lambda_2 > \cdots > \lambda_m \geq \aleph_0$, for every $m < n$, $\lambda_n$ is finite and $\mathcal{Q}$ does not have increasing sequence of components, then $Sib(\mathcal{D})=1$ by Lemma   \ref{Generalnoincreasing}.    
\end{proof}










\section{Extensions}

Remember that in order to prove the alternate Thomass\'e conjecture for countable direct sums of chains (Theorem \ref{DichotomyPcountable}), we just used the wqo property of countable chains and some structural results in Section \ref{Structural}. 
We conjecture that Theorem \ref{DichotomyPcountable} might be generalised to direct sums of those posets which belong to a wqo. For instance, trees can be studied in order theoretical sense. Let $P$ be a poset and $x\in P$. We denote by $P_x$ the set $\{z\in P : z\leq_P x\}$. In this context, a {\em tree} is a poset $T$ such that for every $x\in T$, the set $T_x$ is a chain and any two elements of $T$ are compatible. The greatest lower bound of two elements $x, y$ of a tree $T$, denoted by $x\wedge y$, is called the {\em meet} of $x$ and $y$. A {\em meet-tree} is a tree in which any two elements have a meet. A $\wedge$-{\em embedding} from a meet-tree $T$ to another meet-tree $T'$ is an injective order-preserving map $f:T\to T'$ such that $f(x\wedge y)=f(x)\wedge f(y)$ for all $x, y\in T$. 
A theorem due to Kruskal \cite{KR} asserts that the class of finite trees is wqo with respect to $\wedge$-embedding. Moreover, each finite tree has only one sibling. With these facts, it is easy to show that a direct sum of finite trees has one or infinitely many siblings. However, verifying Thomass\'e's conjecture for a direct sum of finite trees might need some efforts similar to what we did in Section \ref{CountableDSC}. 
Also, there is a result due to Corominas \cite{C} asserting that the class of countable trees is  wqo with respect to $\wedge$-embedding. Therefore, $\wedge$-embedding is a well-founded relation on this class meaning that we can use induction. Clearly, a singleton has one sibling. Hence, we may ask the following. 

\begin{problem}
Let $\mathcal{T}=\bigoplus_{i\in I} T_i$ be a direct sum of countable trees where I is countable such that $Sib(T_i)=1$ or $\infty$ for each $i\in I$. 
\begin{enumerate}
    \item Is it true that $Sib(\mathcal{T})=1$ or $\infty$? 
    \item What can we say about $Sib(\mathcal{T})$?
\end{enumerate}

\end{problem}

Direct sum of posets is a specific case of a more general notion of {\em direct sum of binary relations}. Let $\{\mathcal{E}_i\}_{i\in I}$ be a family of binary relations over a fixed binary relational symbol $R$, that is, $\mathcal{E}_i=(M_i,R)$ for each $i\in I$. Their {\em direct sum}, denoted by $\mathcal{E}:=\bigoplus_{i\in I}\mathcal{E}_i$, is the disjoint union of the $M_i$ such that we have $(a,b)\in R^\mathcal{E}$ if and only if there exists $i\in I$ such that $a, b\in M_i$ and $(a, b)\in R^{\mathcal{E}_i}$. Then, we call each $\mathcal{E}_i$ a {\em component} of $\mathcal{E}$. It is of interest to ask the same questions as in the problem above for a direct sum of binary relations whose components belong to a wqo as well as for a direct sum of binary relations for which the alternate Thomass\'e conjecture or Thomass\'e's conjecture itself has been verified. For instance, direct sum of rayless trees, direct sum of rayless graphs, direct sum of rooted trees, direct sum of scattered trees, etc. 

\vspace{0.5cm}
\noindent{\bf \large Acknowledgements}

I would like to thank my Ph.D. supervisors Professor Robert Woodrow and Professor Claude Laflamme for suggesting this problem and for their help and advice. 

%
%

\vspace{1cm}

\textsc{Department of Mathematics and Statistics, University of Calgary, Calgary, Alberta, Canada, T2N 1N4}

{\em Email address:} \texttt{davoud.abdikalow@alumni.ucalgary.ca}


\begin{thebibliography}{5}
%

\bibitem{ALTW}
D. Abdi, C. Laflamme, A. Tateno, R. Woodrow, \textit{An Example of Tateno Disproving Conjectures of Bonato - Tardif, Thomass\'{e}, and Tyomkyn}, Abh. Math. Semin. Univ. Hambg. (2023).

\bibitem{BBDS} 
A. Bonato, H. Bruhn, R. Diestel, Ph. Spr\"ussel. \textit{Twins of rayless graphs}. J. Combin. Theory Ser. B 101 (2011), 60-65.



\bibitem{BL}
S. Braunfeld, M. C. Laskowski, {\em Counting siblings in universal theories}, The Journal of Symbolic Logic, Volume 87, Number 3, (2022).

\bibitem{BT}
A. Bonato, C. Tardif, \textit{Mutually embeddable graphs and the tree alternative conjecture}, J. Combin. Theory Ser. B 96 (2006), 874-880. 

\bibitem{C}
E. Corominas, {\em On better-quasi-ordering countable trees}, Discrete Math. 53 (1984), 35-53. 

\bibitem{DM}
B. Dushnik, E. W. Miller, \textit{Concerning similarity transformations of linearly ordered sets}, Bull. Amer. Math. Soc. 46 (1940), 322-326. 

\bibitem{HPW}
G. Hahn, M. Pouzet, R. Woodrow, \textit{Siblings of countable cographs}, (2020), \url{https://arxiv.org/abs/2004.12457}. 


\bibitem{KR}
J. B. Kruskal, {\em Well-quasi-ordering, the tree theorem and V\'{a}zsonyi's conjecture}, Trans. Amer.
Math. Soc., 95 (1960), 210-225.

\bibitem{LAV1}
R. Laver, {\em On Fra\"{i}ss\'{e}'s order type conjecture}, Ann. of Math. (2) 93 (1971),  89-111. 

\bibitem{LPS}
C. Laflamme, M. Pouzet, N. Sauer. \textit{Invariant subsets of scattered trees and the tree alternative property of Bonato and Tardif}. Abh. Math. Semin. Univ. Hambg. 87 (2017), 369-408. 

\bibitem{LPSW}
C. Laflamme, M. Pouzet, N. Sauer, R. Woodrow, \textit{Siblings of an $\aleph_0$-categorical relational structure}, Contrib. Discrete Math. 16 (2021), no. 2, 90–127.

\bibitem{LPW}
C. Laflamme, M. Pouzet, R. Woodrow. \textit{Equimorphy- the case of chains}. Arch. Math. Logic 56 (2017), no. 7-8, 811-829. 

\bibitem{ROS}
J. G. Rosenstein, {\em Linear orderings}, Pure and Applied Mathematics, 98. Academic Press, Inc. [Harcourt Brace Jovanovich, Publishers], New York-London, (1982). 

\bibitem{TAT}
A. Tateno, \textit{Mutually embeddable trees and a counterexample to the Tree Alternative Conjecture}, unpublished manuscript,  (2008) 32 pages. 

\bibitem{TH1}
S. Thomass\'{e}, \textit{Conjectures on countable relations}, unpublished manuscript, (2000), 17 pages. 
 
\bibitem{TH2}
S. Thomass\'{e}, \textit{On better-quasi-ordering countable series-parallel orders}, Trans. Amer. Math. Soc. 352 (2000), no 6, 2491-2505. 

\bibitem{TY}
M. Tyomkyn. \textit{A proof of the rooted tree alternative conjecture}. Discrete Math. 309 (2009), 5963-5967. 
 

\end{thebibliography}
\end{document}